\theoremstyle{plain}
\newtheorem{theorem}{Theorem}[section]
\theoremstyle{definition}
\newtheorem{definition}[theorem]{Definition}
\newtheorem{remark}[theorem]{Remark}
\newtheorem{example}[theorem]{Example}
\newtheorem{proposition}[theorem]{Proposition}
\newcommand{\C}{\mathbb{C}}
\newcommand{\R}{\mathbb{R}}
\newcommand{\A}{\mathcal{A}}
\newcommand{\ch}{\operatorname{ch}} 
\newcommand{\Rect}{\operatorname{Rect}} 
\newcommand{\FS}{\operatorname{\mathbb{S}}} 
\newcommand{\disk}{\operatorname{\mathtt{D}^4}}
\newcommand{\sphere}{\operatorname{\mathtt{S}^3}}
\title{Divides with cusps and Kirby diagrams for line arrangements} 
\author{Sakumi Sugawara\thanks{Department of Mathematics, Graduate School of Science, 
Hokkaido University, North 10, West 8, Kita-ku, 
Sapporo 060-0810, JAPAN 
E-mail: sugawara.sakumi.f5@elms.hokudai.ac.jp}, 
Masahiko Yoshinaga\thanks{Department of Mathematics, Faculty of Science, 
Hokkaido University, North 10, West 8, Kita-ku, 
Sapporo 060-0810, JAPAN 
E-mail: yoshinaga@math.sci.hokudai.ac.jp}\\
\ \\
\emph{Dedicated to the memory of Prof. Stefan Papadima}}
\date{\today}
\begin{document}
\maketitle

\begin{abstract}
The complement of a complexified real line arrangement is an 
affine surface. 
It is classically known that such a space 
has a handle decomposition up to $2$-handles. 
We will describe the handle decomposition induced from Lefschetz 
hyperplane section theorem for such a space. 
To describe the Kirby diagram, we introduce the 
notion of the divide with cusps which is a generalization of the divide 
introduced by A'Campo. \\
\textbf{Keywords}: line arrangements, divides, Kirby diagrams\\
\textbf{MSC classes}:	32S50, 32Q55, 52C35
\end{abstract}


\section{Introduction}
\label{sec:intro}

Let $\A=\{H_1, \dots, H_n\}$ be a set of hyperplanes in a complex vector 
space $\C^\ell$. 
The complement $M(\A)=\C^\ell\smallsetminus\bigcup_{H\in\A}H$ is 
an important topological space that has been studied from the topological and 
combinatorial perspective (\cite{orl-ter}). 
One of the most peculiar properties of $M(\A)$ is 
the so-called \emph{minimality}, that is, $M(\A)$ is homotopy equivalent to a 
finite CW complex such that the number of $k$-cells is equal to 
the $k$-th Betti number $b_k(M(\A))$ for all $k\geq 0$ \cite{ran-mor, dim-pap} 
(see also \cite{fal-hom} for the case $\ell=2$). 
Note that the original proof did not give a precise description of attaching maps 
of minimal CW complexes. Later several approaches to this problem 
were developed for complexified real arrangements. 
For example, descriptions via Lefschetz hyperplane section theorem 
\cite{yos-lef} and via discrete Morse theory \cite{sal-sett} are known. 

When $\ell=2$, the complement $M(\A)$ is a complex smooth affine 
surface. It has been well known that such a space is a $4$-manifold 
which has a handle decomposition by $0$-, $1$-, and $2$-handles. 
Such handle decompositions are completely described by 
Kirby diagrams (\cite{akb, kir-top, gom-sti}). 
However, as far as the authors know, 
there are no results available in the literature concerning explicit 
handle decompositions and Kirby diagrams for $M(\A)$. 
(Recently, the handle decompositions for the complement of 
real $1$-dimensional subspaces in $\R^\ell$ were obtained 
\cite{ish-oya}.) The purpose of the present paper is to 
give a description of handle decompositions and Kirby diagrams 
of $M(\A)$ for complexified real line arrangements. 

Our approach consists of two steps. The first step is describing 
the handle decomposition explicitly (\S \ref{sec:handle}). 
This part is a refinement of a previous description (\cite{yos-lef, yos-str}) 
of the attaching maps of $2$-cells on $1$-skeletons. We will 
describe the attaching of $2$-handles using piecewise linear maps. 

The second step is describing the Kirby diagram explicitly. 
For this purpose, we introduce the notion of the 
\emph{divide with cusps}. 

Simply speaking, a \emph{divide} $C$ is a union of immersed curves 
in the $2$-dimensional unit disk $D^2$. 
Although the divide was originally introduced in the context of 
complex plane curve singularities, 
it is of interest in low dimensional topology 
since one can associate a link $L(C)$ in $S^3$ to a divide $C$ (\cite{acampo}). 
The notion of the divide can be considered as 
a ``$2$-dimensional way'' of describing certain links. 
There are also several generalizations of the notion of divide, e.g., 
oriented divides and free divides (\cite{gi-top, gi-osaka}), etc. 
(In particular, the notion of cusp in the present paper 
may be considered as ``divisors'' (in the sense of 
\cite{gi-osaka}) equipped with half-integer signs $\pm\frac{1}{2}$.) 

The \emph{divide with cusps} is, roughly speaking, a generalization 
that allows cusps in the union of curves $C$ in the $D^2$. Since 
the tangent space is well-defined also at a cusp, one can associate 
a link as in the case of divides. Our main result asserts that 
there exists a divide with cusps such that its link represents 
the Kirby diagram of $M(\A)$. 

The paper is organized as follows. 
In \S \ref{sec:sphere} we prepare piecewise linear descriptions 
of the $3$-sphere $S^3$ and certain circles in it which will be used later. 
In \S \ref{sec:divides} we define divides with cusps and associated 
links. 
In \S \ref{sec:handle} we give a piecewise linear description of 
handle decompositions for the complements to complexified 
real line arrangements. Here Kirby diagrams (the framed links consisting of 
boundaries of carved disks and 
attaching circles of $2$-handles) are described in terms 
of piecewise linear curves introduced in the previous section. 
In \S \ref{sec:kirby} we describe the Kirby diagram using 
divide with cusps.















\section{Standard and PL models for $S^3$}

\label{sec:sphere}

Let $D^2\subset \R^2$ be the $2$-disk. The starting point of 
the theory of divides (\cite{acampo}) is to describe 
the $3$-sphere $S^3$ in terms of tangent vectors on $D^2$ as follows. 
\[
\{(\bm{x}, \bm{y})\mid \bm{x}\in D^2, \bm{y}\in T_{\bm{x}}\R^2, 
|\bm{x}|^2+|\bm{y}|^2=1\}, 
\]
where $\bm{x}=(x_1, x_2), \bm{y}=(y_1, y_2),$ and $|\bm{x}|=\sqrt{x_1^2+x_2^2}$. 
We will also identify $T\R^2$ with $\C^2$ by 
$(\bm{x}, \bm{y})\longmapsto \bm{x}+\sqrt{-1}\bm{y}$. Therefore, we 
sometimes call $\bm{x}$ (resp. $\bm{y}$) the \emph{real part} 
(resp. \emph{imaginary part}) of $(\bm{x}, \bm{y})$. 

In this paper, we will use the following piecewise linear (rectangular) 
model for $S^3$. 
For $\bm{x}=(x_1, x_2)$, 
let us denote the maximum norm by $||\bm{x}||_\infty:=\max\{|x_1|, |x_2|\}$. 
Let $R>0$ be a positive real number. The rectangle $\Rect(R, 1)$ is 
defined as 
\[
\begin{split}
\Rect(R, 1):=&[-R, R]\times[-1, 1]\\
=&\{\bm{x}=(x_1, x_2)\in\R^2\mid 
-R\leq x_1\leq R, -1\leq x_2\leq 1\}. 
\end{split}
\]
For $\bm{x}\in\Rect(R, 1)$, let $\delta(\bm{x})$ be 
the distance between $\bm{x}$ and the boundary $\partial\Rect(R, 1)$ 
with respect to the norm $||\cdot||_\infty$. In other words, 
$\delta(\bm{x})=\min\{x_1+R, R-x_1, x_2+1, 1-x_2\}$. 
We will use the following piecewise linear presentation $\sphere (R, 1)$ of the 
sphere (see Figure \ref{fig:rect}). 
\begin{equation}
\label{eq:S3}
S^3\approx \sphere (R, 1):=
\{(\bm{x}, \bm{y})\mid \bm{x}\in \Rect(R, 1), \bm{y}\in T_{\bm{x}}\R^2, 
||\bm{y}||_\infty=\delta(\bm{x})\}. 
\end{equation}
\begin{figure}[htbp]
\centering
\begin{tikzpicture}
 \draw [thick](-5,-1) rectangle (5,1);


\draw (-5,1) node[left] {$(-5,1)$}; 
\draw (-5,-1) node[left] {$(-5,-1)$}; 
\draw (5,-1) node[right] {$(5,-1)$}; 
\draw (5,1) node[right] {$(5,1)$}; 
 
 \coordinate [label =below left:$O$] (O) at (0,0);
 \fill [black] (O) circle (0.06);
 \draw (1.3,-0.4) rectangle (2.7,1);
 \draw [->,>=stealth] (2, 0.3)-- +(0.7, 0);
 \draw [->,>=stealth] (2, 0.3)-- +(0.7, 0.4);
 \draw [->,>=stealth] (2, 0.3)-- +(0.7, 0.7);
 \draw [->,>=stealth] (2, 0.3)-- +(0.4, 0.7);
 \draw [->,>=stealth] (2, 0.3)-- +(0, 0.7);
 \draw [->,>=stealth] (2, 0.3)-- +(-0.4, 0.7);
 \draw [->,>=stealth] (2, 0.3)-- +(-0.7, 0.7);
 \draw [->,>=stealth] (2, 0.3)-- +(-0.7, 0.4);
 \draw [->,>=stealth] (2, 0.3)-- +(-0.7, 0);
 \draw [->,>=stealth] (2, 0.3)-- +(-0.7, -0.4);
 \draw [->,>=stealth] (2, 0.3)-- +(-0.7, -0.7);
 \draw [->,>=stealth] (2, 0.3)-- +(-0.4, -0.7);
 \draw [->,>=stealth] (2, 0.3)-- +(0, -0.7);
 \draw [->,>=stealth] (2, 0.3)-- +(0.4, -0.7);
 \draw [->,>=stealth] (2, 0.3)-- +(0.7, -0.7);
 \draw [->,>=stealth] (2, 0.3)-- +(0.7, -0.4);
\end{tikzpicture}
\caption{$\Rect(5, 1)$ and a circle $\FS(\bm{a}, \bm{a})$ with 
$\bm{a}=(2, 0.3)$}\label{fig:rect}
\end{figure}
Next, we define a special family of circles which are defined as 
images of piecewise linear maps from the boundary of the 
square $\partial\Rect(1, 1)$. 
Let $\bm{a}_1=(a_1, b), \bm{a}_2=(a_2, b)\in\Rect(R, 1)$ be two points with 
the same height $b$. 
For simplicity, we assume $a_1\leq a_2, 0\leq |a_1|, |a_2|\ll R$, and $0\leq b<1$. 
We decompose the boundary $\partial\Rect(1, 1)$ into four edges 
$R_1, R_2, R_3, R_4$ as shown in Figure \ref{fig:r1234}. 
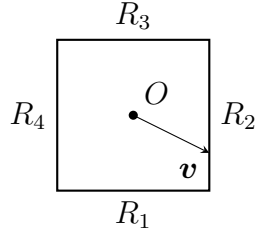
\begin{figure}[htbp]
\centering
\begin{tikzpicture}
 \draw [thick](-1,-1) rectangle (1,1);
 \coordinate [label = below :$R_1$] (A) at (0,-1);
 \coordinate [label = right :$R_2$] (B) at (1,0);
 \coordinate [label = above :$R_3$] (C) at (0,1);
 \coordinate [label = left :$R_4$] (D) at (-1,0);
 
 \coordinate [label =above right:$O$] (O) at (0,0);
 \fill [black] (O) circle (0.06);

 \draw [->,>=stealth] (O)--(1,-0.5) node [below left] {$\bm{v}$};
\end{tikzpicture}
\caption{decomposition into $R_1$, ... ,  $R_4$}\label{fig:r1234}

\end{figure}

Let $\bm{v}=(v_1, v_2)\in\partial \Rect(1, 1)=R_1\cup R_2\cup R_3\cup R_4$. 
Define the map $F:\partial \Rect(1, 1)\longrightarrow \sphere(R, 1)$ 
as follows. 
\[
F(\bm{v})=
\begin{cases}
\left(\left(\frac{a_2-a_1}{2}v_1+\frac{a_1+a_2}{2}, b\right), (1-b)\bm{v}\right), & v_2=-1\ (R_1),\\
\left(\left(\frac{a_1-a_2}{2}v_2+\frac{a_1+a_2}{2}, b\right), (1-b)\bm{v}\right), & v_1=1\ (R_2),\\
\left(\left(\frac{a_1-a_2}{2}v_1+\frac{a_1+a_2}{2}, b\right), (1-b)\bm{v}\right), &v_2=1\ (R_3),\\
\left(\left(\frac{a_2-a_1}{2}v_2+\frac{a_1+a_2}{2}, b\right), (1-b)\bm{v}\right), & v_1=-1\ (R_4), 
\end{cases}
\]
(See also Figure \ref{fig:FS}). We denote the image of this map by 
\[
\FS(\bm{a}_1, \bm{a}_2)=F(R_1)\cup F(R_2)\cup F(R_3)\cup F(R_4). 
\] 
Since the argument of the tangent vector is preserved, 
$\FS(\bm{a}_1, \bm{a}_2)$ is an embedded (piecewise linear) circle. 
When $\bm{a}_1=\bm{a}_2$, $\FS(\bm{a}, \bm{a})$ is a circle (square) as in 
Figure \ref{fig:rect}. 

\begin{figure}[htbp]
\centering
\begin{tikzpicture}

 \coordinate (O1) at (0,0);
 \draw [thick] (O1)+(-3,-1) rectangle +(3,1);
\draw[thick] (O1) +(-1,0.3) -- (2,0.3);
\fill[black] (O1)+(-1,0.3) node [above] {\scriptsize $\bm{a}_1=F(-1, -1)$} circle(0.06); 
\fill[black] (O1)+(2,0.3) node [above] {\scriptsize $\bm{a}_2=F(1, -1)$} circle(0.06); 
\draw [->,>=stealth, thick] (O1)+(-1,0.3)--(-1.7,-0.4);
\draw [->,>=stealth, thick] (O1)+(-0.25,0.3)--(-0.75,-0.4);
\draw [->,>=stealth, thick] (O1)+(0.5,0.3)--(0.5,-0.4);
\draw [->,>=stealth, thick] (O1)+(1.25,0.3)--(1.75,-0.4);
\draw [->,>=stealth, thick] (O1)+(2,0.3)--(2.7,-0.4);
\draw (O1)+(0, -1) node [above] {$F(R_1)$};

 \coordinate (O2) at (4,2.5);
 \draw [thick] (O2)+(-3,-1) rectangle +(3,1);
\draw[thick] (O2) +(-1,0.3) -- ++(2,0.3);
\fill[black] (O2)+(-1,0.3) node [below] {\scriptsize $F(1, 1)$} circle(0.06); 
\fill[black] (O2)+(2,0.3) node [above] {\scriptsize $F(1, -1)$} circle(0.06); 
\draw [->,>=stealth, thick] (O2)+(-1,0.3)-- ++(-0.3,1);
\draw [->,>=stealth, thick] (O2)+(-0.25,0.3)-- ++(0.45,0.8);
\draw [->,>=stealth, thick] (O2)+(0.5,0.35)--++(1.2,0.35);
\draw [->,>=stealth, thick] (O2)+(1.25,0.3)--++(1.95,-0.2);
\draw [->,>=stealth, thick] (O2)+(2,0.3)--++(2.7,-0.4);
\draw (O2)+(0, -1) node [above] {$F(R_2)$};

 \coordinate (O3) at (0,5);
 \draw [thick] (O3)+(-3,-1) rectangle +(3,1);
\draw[thick] (O3) +(-1,0.3) -- ++(2,0.3);
\fill[black] (O3)+(-1,0.3) node [below] {\scriptsize $F(1, 1)$} circle(0.06); 
\fill[black] (O3)+(2,0.3) node [below] {\scriptsize $F(-1, 1)$} circle(0.06); 
\draw [->,>=stealth, thick] (O3)+(-1,0.3)-- ++(-0.3,1);
\draw [->,>=stealth, thick] (O3)+(-0.25,0.3)-- ++(0.25,1);
\draw [->,>=stealth, thick] (O3)+(0.5,0.35)--++(0.5,1);
\draw [->,>=stealth, thick] (O3)+(1.25,0.3)--++(0.75,1);
\draw [->,>=stealth, thick] (O3)+(2,0.3)--++(1.3,1);
\draw (O3)+(0, -1) node [above] {$F(R_3)$};

 \coordinate (O4) at (-4,2.5);
 \draw [thick] (O4)+(-3,-1) rectangle +(3,1);
\draw[thick] (O4) +(-1,0.3) -- ++(2,0.3);
\fill[black] (O4)+(-1,0.3) node [above] {\scriptsize $F(-1, -1)$} circle(0.06); 
\fill[black] (O4)+(2,0.3) node [below] {\scriptsize $F(-1, 1)$} circle(0.06); 
\draw [->,>=stealth, thick] (O4)+(-1,0.3)-- ++(-1.7,-0.4);
\draw [->,>=stealth, thick] (O4)+(-0.25,0.3)-- ++(-0.95,-0.2);
\draw [->,>=stealth, thick] (O4)+(0.5,0.35)--++(-0.2,0.35);
\draw [->,>=stealth, thick] (O4)+(1.25,0.3)--++(0.55,0.5);
\draw [->,>=stealth, thick] (O4)+(2,0.3)--++(1.3,1);
\draw (O4)+(0, -1) node [above] {$F(R_4)$};




\end{tikzpicture}
\caption{$F(R_1)$, $F(R_2)$, $F(R_3)$, and $F(R_4)$}\label{fig:FS}

\end{figure}
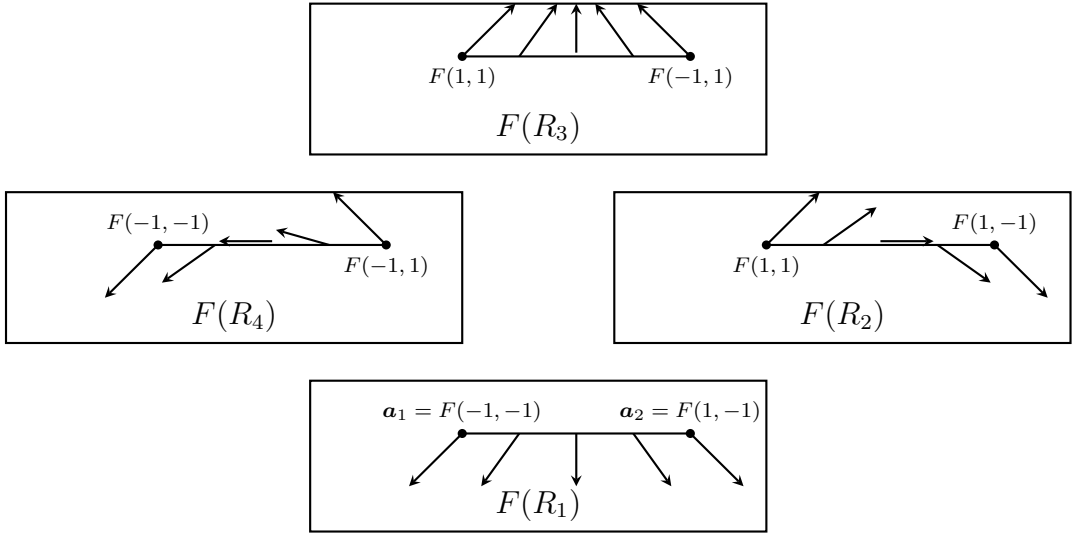
For simplicity, we draw $\FS(\bm{a}_1, \bm{a}_2)$ as in Figure \ref{fig:shortFS}. 
\begin{figure}[htbp]
\centering
\begin{tikzpicture}
\draw [very thick] (-3,0)--(3,0);
\fill[black] (-3,0) node [below] {\small $\bm{a}_1$} circle(0.06);
\draw [->,>=stealth, thick] (-3,0)--++(0.5,0.5);
\draw [->,>=stealth, thick] (-3,0)--++(-0.5,-0.5);
\draw [thick] (-3.5,-0.5) rectangle +(1,1);

\fill[black] (3,0) node [below] {\small $\bm{a}_2$} circle(0.06);
\draw [->,>=stealth, thick] (3,0)--++(-0.5,0.5);
\draw [->,>=stealth, thick] (3,0)--++(0.5,-0.5);
\draw [thick] (2.5,-0.5) rectangle +(1,1);

\end{tikzpicture}
\caption{Pictorial notation for $\FS(\bm{a}_1, \bm{a}_2)=
F(R_1)\cup F(R_2)\cup F(R_3)\cup F(R_4)$}\label{fig:shortFS}
\end{figure}
\begin{proposition}
\label{prop:trivial}
$\FS(\bm{a}_1, \bm{a}_2)$ is isotopic to $ \FS(\bm{a}_1, \bm{a}_1)$. 
Furthermore, $\FS(\bm{a}_1, \bm{a}_2)$ is a trivial knot. 
\end{proposition}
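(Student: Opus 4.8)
The plan is to establish the two assertions in order, reducing the unknottedness to the isotopy statement. For the isotopy $\FS(\bm{a}_1, \bm{a}_2)\simeq\FS(\bm{a}_1, \bm{a}_1)$, I would simply slide the second marked point toward the first at constant height. Concretely, set $\bm{a}_2(s)=\bigl((1-s)a_2+s\,a_1,\ b\bigr)$ for $s\in[0,1]$, so that $\bm{a}_2(0)=\bm{a}_2$, $\bm{a}_2(1)=\bm{a}_1$, and the first coordinate stays between $a_1$ and $a_2$ (hence $\ll R$). This produces a one-parameter family $\FS(\bm{a}_1, \bm{a}_2(s))$. The decisive observation is that each member is again an embedded circle in $\sphere(R,1)$: inspecting the formula for $F$, the imaginary part is always $(1-b)\bm{v}$, independent of $a_1,a_2$, and $\bm{v}\mapsto(1-b)\bm{v}$ is injective on $\partial\Rect(1,1)$ since $1-b>0$; this is exactly the argument-preservation remark recorded just before the proposition. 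Moreover $\|(1-b)\bm{v}\|_\infty=1-b=\delta(\bm{x})$ at every point of height $b$ with small first coordinate, so the image lies on $\sphere(R,1)$ for all $s$. As the family is piecewise linear and jointly continuous in $(s,\bm{v})$, it is an isotopy of embeddings, and the PL isotopy extension theorem upgrades it to an ambient isotopy with endpoint $\FS(\bm{a}_1, \bm{a}_1)$.

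Given this, it remains to show $\FS(\bm{a}_1, \bm{a}_1)$ is trivial. I would note that it is precisely the fiber circle over $\bm{a}_1=(a_1,b)$, namely $\{(\bm{a}_1, (1-b)\bm{v})\mid \bm{v}\in\partial\Rect(1,1)\}$, the set of tangent vectors of maximum-norm $\delta(\bm{a}_1)=1-b$. To exhibit a spanning disk I would push the base point straight up to the boundary of $\Rect(R,1)$, where $\delta$ vanishes and the fiber circles collapse. Let $\gamma(u)=(a_1,\ b+u(1-b))$, so $\gamma(0)=\bm{a}_1$ and $\gamma(1)=(a_1,1)$ lies on the top edge. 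Writing a point of $D^2=[-1,1]^2$ as $r\bm{v}$ with $r=\|w\|_\infty\in[0,1]$ and $\bm{v}\in\partial\Rect(1,1)$, define
\[
\Phi(r\bm{v})=\bigl(\gamma(1-r),\ \delta(\gamma(1-r))\,\bm{v}\bigr).
\]
A direct computation gives $\delta(\gamma(u))=(1-b)(1-u)$, so $\Phi$ lands on $\sphere(R,1)$, restricts to $\FS(\bm{a}_1, \bm{a}_1)$ on $\partial D^2$ (where $r=1$), and sends the center $r=0$ to the single point $(\gamma(1),\bm{0})$. Injectivity on the interior holds because $\gamma$ is injective (it separates the values of $r$) and $\delta(\gamma(1-r))>0$ for $r>0$ (it then separates the values of $\bm{v}$); the interior is disjoint from the boundary circle since it lies over points $\gamma(1-r)\ne\bm{a}_1$. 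Hence $\Phi$ is an embedded PL disk bounded by $\FS(\bm{a}_1, \bm{a}_1)$, so this circle is the unknot, and by the first part so is $\FS(\bm{a}_1, \bm{a}_2)$.

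The only computational content consists of two elementary norm checks, namely $\|\bm{y}\|_\infty=1-b$ throughout the isotopy and $\delta(\gamma(u))=(1-b)(1-u)$ along the pushing path; both follow at once from $|a_1|,|a_2|\ll R$ and $0\le b<1$. I expect the single point genuinely needing care to be the claim that the sliding family in the first part consists of honest embeddings for every $s$, but this is supplied for free by the argument-preservation observation, since the imaginary part is the fixed injective scaling $\bm{v}\mapsto(1-b)\bm{v}$ regardless of $s$. After that, the proof is just assembling the cone disk $\Phi$ and confirming it is embedded, which is routine.
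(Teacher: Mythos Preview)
Your proof is correct and follows the paper's approach exactly: the paper's own argument is the one-line observation that $\FS(\bm{a}_1, t\bm{a}_1+(1-t)\bm{a}_2)$, $0\le t\le 1$, is an isotopy and that $\FS(\bm{a}_1,\bm{a}_1)$ is trivial, which is precisely your sliding isotopy $\bm{a}_2(s)$. Your explicit cone disk $\Phi(w)=\bigl((a_1,\,1-(1-b)\|w\|_\infty),\,(1-b)w\bigr)$ supplies the detail behind the paper's unadorned claim that the fiber circle is unknotted; this is a harmless elaboration rather than a different route.
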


\begin{proof}
It is easily seen that 
$\FS(\bm{a}_1, t\bm{a}_1+(1-t)\bm{a}_2)$, $0\leq t\leq 1$, gives 
an isotopy and $\FS(\bm{a}_1, \bm{a}_1)\subset \sphere(R, 1)$ is trivial. 
\end{proof}

\begin{remark}
Note that $\FS(\bm{a}_1, \bm{a}_2)$ can be regarded as a piecewise 
linear knot with four edges. 
The triviality of $\FS(\bm{a}_1, \bm{a}_2)$ may also be 
justified by the fact that every knotted piecewise linear knot 
has at least six edges (\cite{cal-mil}). 
\end{remark}

\begin{proposition}
\label{prop:linking}
Let $\bm{a}_i=(a_i, b), \bm{a}_i'=(a_i, b')$, ($i=1, 2$). Assume $b\neq b'$. 
Then $\FS(\bm{a}_1, \bm{a}_2)\cap\FS(\bm{a}_1', \bm{a}_2')=\emptyset$.  
Furthermore, 
$\FS(\bm{a}_1, \bm{a}_2)$ and $\FS(\bm{a}_1', \bm{a}_2')$ are not linked. 
In particular, the linking number is 
$\operatorname{Lk}(\FS(\bm{a}_1, \bm{a}_2), \FS(\bm{a}_1', \bm{a}_2'))=0$. 
\end{proposition}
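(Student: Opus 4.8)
The plan is to prove the three assertions in order of increasing depth: disjointness first (which is essentially a bookkeeping observation), then unlinkedness via a separating sphere, from which the vanishing of the linking number follows formally.

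First I would record that every point of $\FS(\bm{a}_1,\bm{a}_2)$ has \emph{real part} of the form $(\ast, b)$: inspecting the four cases in the definition of $F$, the second coordinate of the real part is always $b$, regardless of which edge $R_1,\dots,R_4$ the parameter $\bm{v}$ lies on. Likewise every point of $\FS(\bm{a}_1',\bm{a}_2')$ has real part $(\ast, b')$. Since two points of $\sphere(R,1)$ coincide only if their real parts coincide, and $b\neq b'$, the two circles are disjoint. This settles the first claim with no further work.

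For the unlinking I would exhibit an explicit separating $2$-sphere. Assume without loss of generality that $b<b'$ and fix $c$ with $b<c<b'$; note $c\in(-1,1)$. Consider the slice
\[
L_c:=\{(\bm{x},\bm{y})\in\sphere(R,1)\mid x_2=c\}.
\]
I would show $L_c$ is an embedded PL $2$-sphere. Over each $x_1\in(-R,R)$ the fibre $\{\bm{y}\mid \|\bm{y}\|_\infty=\delta((x_1,c))\}$ is the boundary square of size $\delta((x_1,c))=\min\{x_1+R,\,R-x_1,\,1+c,\,1-c\}>0$, i.e.\ a circle; as $x_1\to\pm R$ this size tends to $0$, so the fibre collapses to the single point $((\pm R,c),\bm{0})$. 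Thus $L_c$ is the (unreduced) suspension of a circle with cone points at $((\pm R,c),\bm{0})$, that is, a $2$-sphere, and it is embedded since distinct parameters give distinct pairs $(\bm{x},\bm{y})$.

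It remains to conclude. The sphere $L_c$ separates $\sphere(R,1)$ into the two regions $\{x_2<c\}$ and $\{x_2>c\}$, each of which is an open $3$-ball — either by Alexander's theorem (every PL $2$-sphere in $S^3$ bounds balls on both sides), or by comparing with the round model, where $\{x_2=c\}$ is a latitude sphere. By the first part, $\FS(\bm{a}_1,\bm{a}_2)$ lies entirely in $\{x_2=b\}\subset\{x_2<c\}$ and $\FS(\bm{a}_1',\bm{a}_2')$ lies in $\{x_2=b'\}\subset\{x_2>c\}$, so $L_c$ separates the two circles and the union is a split link. Since each component is a trivial knot by Proposition \ref{prop:trivial}, the pair is the two–component unlink; in particular $\FS(\bm{a}_1,\bm{a}_2)$ bounds a disk inside its own ball, disjoint from $\FS(\bm{a}_1',\bm{a}_2')$, whence $\operatorname{Lk}(\FS(\bm{a}_1,\bm{a}_2),\FS(\bm{a}_1',\bm{a}_2'))=0$. (Equivalently, $[\FS(\bm{a}_1,\bm{a}_2)]=0$ in $H_1$ of the ball $\{x_2<c\}$, which maps to $H_1(\sphere(R,1)\smallsetminus\FS(\bm{a}_1',\bm{a}_2'))\cong\Z$.)

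The only genuine technical point is the middle step: verifying that $L_c$ is an honestly embedded, unknotted $2$-sphere and that its two sides are balls, which requires handling the fibre degeneration at $x_1=\pm R$ and either invoking Alexander's theorem or setting up the comparison with the round model. Everything else — the disjointness computation and the passage from a separating sphere to a split link with zero linking number — is formal.
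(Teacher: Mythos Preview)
Your proof is correct and follows essentially the same approach as the paper: both establish disjointness by observing that the real parts have distinct second coordinates, and both exhibit the separating $2$-sphere as the slice of $\sphere(R,1)$ over the horizontal line $\{x_2=c\}$ for $c$ strictly between $b$ and $b'$. The paper simply asserts that this slice is an $S^2$ and concludes, whereas you supply the additional detail (the suspension description, the appeal to Alexander's theorem for the complementary balls); this is extra care rather than a different argument.
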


\begin{proof}
Since they have no common real parts, 
$\FS(\bm{a}_1, \bm{a}_2)\cap\FS(\bm{a}_1', \bm{a}_2')=\emptyset$ is 
obvious. 
We may assume $b>b'$. Let $L:=\{x_2=\frac{b+b'}{2}\}$ be the horizontal 
line separating two segments $[\bm{a}_1, \bm{a}_2]$ and 
$[\bm{a}_1', \bm{a}_2']$. Then 
\[
\{(\bm{x}, \bm{y})\mid \bm{x}\in L, \bm{y}\in T_{\bm{x}}\R^2, 
||\bm{y}||_\infty=\delta(\bm{x})\}
\]
defines an $S^2$ in $\sphere(R, 1)$ which separates 
$\FS(\bm{a}_1, \bm{a}_2)$ and $\FS(\bm{a}_1', \bm{a}_2')$ (Figure \ref{fig:separation}). 
Therefore, they are not linked. 
\end{proof}

\begin{figure}[htbp]
\centering
\begin{tikzpicture}

\draw[thick] (-5,0.2) node [above right] {$L$} --(5, 0.2) ;

\draw [very thick] (-2,1)--(3,1);
\fill[black] (-2,1) node [below] {\small $\bm{a}_1$} circle(0.06);
\draw [->,>=stealth] (-2,1)--++(0.5,0.5);
\draw [->,>=stealth] (-2,1)--++(-0.5,-0.5);
\draw [thick] (-2.5,0.5) rectangle +(1,1);

\fill[black] (3,1) node [below] {\small $\bm{a}_2$} circle(0.06);
\draw [->,>=stealth] (3,1)--++(-0.5,0.5);
\draw [->,>=stealth] (3,1)--++(0.5,-0.5);
\draw [thick] (2.5,0.5) rectangle +(1,1);

\draw [very thick] (-3,-1)--(2,-1);
\fill[black] (-3,-1) node [below] {\small $\bm{a}_1'$} circle(0.06);
\draw [->,>=stealth] (-3,-1)--++(0.5,0.5);
\draw [->,>=stealth] (-3,-1)--++(-0.5,-0.5);
\draw [thick] (-3.5,-1.5) rectangle +(1,1);

\fill[black] (2,-1) node [below] {\small $\bm{a}_2'$} circle(0.06);
\draw [->,>=stealth] (2,-1)--++(-0.5,0.5);
\draw [->,>=stealth] (2,-1)--++(0.5,-0.5);
\draw [thick] (1.5,-1.5) rectangle +(1,1);

\end{tikzpicture}
\caption{Separation of $\FS(\bm{a}_1, \bm{a_2})$ and 
$\FS(\bm{a}_1', \bm{a_2}')$}\label{fig:separation}

\end{figure}

\section{Divides with cusps}

\label{sec:divides}

In this section, we introduce the notion of the divide with cusps and 
associated links. 

Let $X$ be a compact $1$-dimensional manifold. 
Then, 
$X$ is expressed as a disjoint union of finitely many intervals and circles 
$\bigsqcup_j I_j \sqcup\bigsqcup_k S_k$, where $I_j$ and $S_k$ are 
intervals and circles, respectively. 
We have $\partial X=\bigsqcup_j\partial I_j$. 
\begin{definition}
A \emph{divide with cusps} $C$ is the image of a continuous map 
$\alpha: (X, \partial X)\longrightarrow (D^2, \partial D^2)$ 
satisfying the following conditions. 
\begin{itemize}
\item[(i)] 
$\alpha^{-1}(\partial D)=\partial X$, $\alpha|_{\partial X}$ is injective, 
and $C$ is transversal to $\partial D$. 
\item[(ii)] 
There are finitely many points $p_1, \dots, p_s$ in the interior of $X$ 
such that 
\begin{itemize}
\item 
$\alpha$ is an immersion on $X\smallsetminus\{p_1, \dots, p_s\}$ 
and $\alpha(p_i)$ is a cusp of $C$. 
\item 
$\alpha^{-1}(\alpha(p_i))=\{p_i\}$. 
\end{itemize}
\item[(iii)] 
Except for cusps and double points, there are no singularities. 
\end{itemize}
(See Figure \ref{fig:cuspdivide}.) 
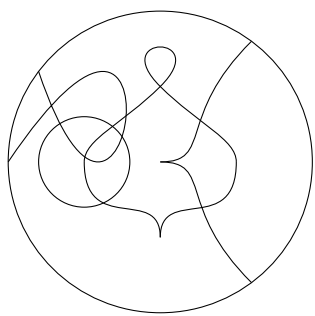
\begin{figure}[htbp]
\centering
\begin{tikzpicture}

\draw (0,0) circle (2);
\draw (-1,0) .. controls ++(0,0.3) and ++(-0.5, -0.5) .. (0,1);
\draw (1,0) .. controls ++(0,0.3) and ++(0.5, -0.5) .. (0,1);
\draw (0,1) .. controls ++(0.7,0.7) and ++(-0.7,0.7) .. (0,1);
\draw (-1,0) .. controls ++(0,-1) and ++(0,0.7) .. (0,-1);
\draw (1,0) .. controls ++(0,-1) and ++(0,0.7) .. (0,-1);

\draw (1.2,1.6) .. controls ++(-1,-1) and ++(0.7,0) .. (0,0);
\draw (1.2,-1.6) .. controls ++(-1,1) and ++(0.7,0) .. (0,0);

\draw (-1.6,1.2) .. controls ++(1.2,-3.6) and ++(2.4,3.6) .. (-2,0);

\draw (-1,0) circle (0.6);

\end{tikzpicture}
\caption{A divide with cusps}\label{fig:cuspdivide}
\end{figure}
\end{definition}
Let $C$ be a divide with cusps. We consider tangent space at 
$\bm{x}=(x_1, x_2)\in C$. If $\bm{x}$ is neither double points nor 
cusps, then $T_{\bm{x}}C$ is as usual. Note that there is a 
natural tangent space $T_{\bm{x}}C$ at a cusp (Figure \ref{fig:tangent}). 
At a double point, $T_{\bm{x}}C$ is a union of two tangent spaces. 


\begin{figure}[htbp]
\centering
\begin{tikzpicture}

\draw (0,0) .. controls (0,1) and (4,1) .. (4,0) node [right] {$C$};
\draw (0,0) .. controls (0,-1) and (2,0) .. (2,-1);
\draw (4,0) .. controls (4,-1) and (2,0) .. (2,-1);

\fill[black] (2,-1) node [left] {\small $p$} circle(0.06);

\draw [dashed] (2,0) -- (2,-2) node [right] {\small $T_p C$};

\end{tikzpicture}
\caption{Tangent space $T_pC$ at cusp $p\in C$}\label{fig:tangent}
\end{figure}
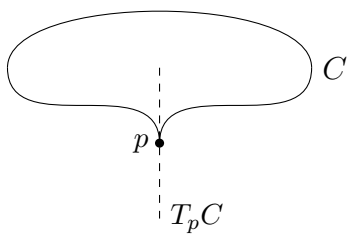
To a divide with cusps $C$, the link $L(C)$ is defined as 
\[
L(C)=\{(\bm{x}, \bm{y}), \mid \bm{x}\in C, \bm{y}\in T_{\bm{x}}(C), 
|\bm{x}|^2+|\bm{y}|^2=1\}\subset S^3. 
\]
By definition, when $\bm{x}\in C$ is in the interior of $D^2$, 
there are exactly two tangent vectors $\pm\bm{y}\in T_{\bm{x}}C$ which 
is contained in $L(C)$. The vector $\bm{y}$ is determined by the argument. 
Hence (over the interior) the link $L(C)$ can be drawn in $C\times S^1$. 
Note that 
two kinds of cusps are corresponding to 
positive and negative half twists (Figure \ref{fig:twist}). 





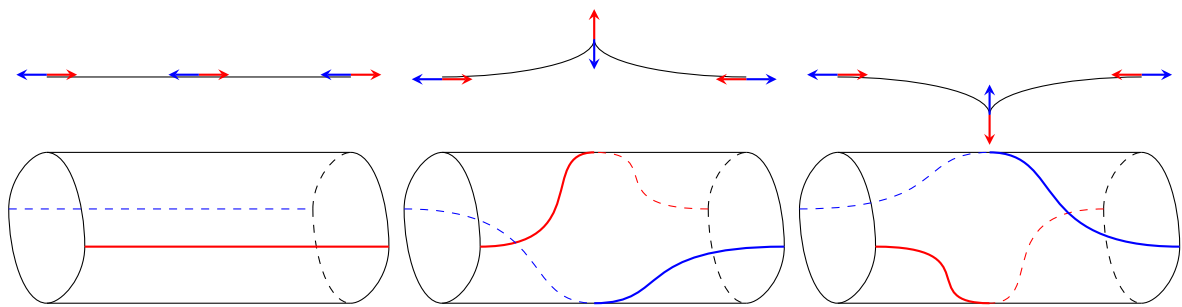
\begin{figure}[htbp]
\centering
\begin{tikzpicture}


\coordinate (O1) at (0,1);
\coordinate (O11) at (4,1);
\coordinate (O2) at (5.2,1);
\coordinate (O22) at (9.2,1);
\coordinate (O3) at (10.4,1);
\coordinate (O33) at (14.4,1);


\draw (O1)+(0,2) -- ++(4,2);
\draw [->,>=stealth,thick, red] (O1)+(0,2.03)--++(0.4,2.03);
\draw [->,>=stealth,thick, blue] (O1)+(0,2.03)--++(-0.4,2.03);
\draw [->,>=stealth,thick, red] (O1)+(2,2.03)--++(2.4,2.03);
\draw [->,>=stealth,thick, blue] (O1)+(2,2.03)--++(1.6,2.03);
\draw [->,>=stealth,thick, red] (O11)+(0,2.03)--++(0.4,2.03);
\draw [->,>=stealth,thick, blue] (O11)+(0,2.03)--++(-0.4,2.03);

\draw (O1)+(-0.5,0.25) .. controls ++(0,0.4) and ++(-0.2,0) .. ++(0,1);
\draw (O1)+(-0.5,0.25) .. controls ++(0,-0.6) and ++(-0.3,0) .. ++(0,-1);
\draw (O1)+(0.5,-0.25) .. controls ++(0,0.6) and ++(0.3,0) .. ++(0,1);
\draw (O1)+(0.5,-0.25) .. controls ++(0,-0.4) and ++(0.2,0) .. ++(0,-1);

\draw [dashed] (O11)+(-0.5,0.25) .. controls ++(0,0.4) and ++(-0.2,0) .. ++(0,1);
\draw [dashed] (O11)+(-0.5,0.25) .. controls ++(0,-0.6) and ++(-0.3,0) .. ++(0,-1);
\draw (O11)+(0.5,-0.25) .. controls ++(0,0.6) and ++(0.3,0) .. ++(0,1);
\draw (O11)+(0.5,-0.25) .. controls ++(0,-0.4) and ++(0.2,0) .. ++(0,-1);

\draw (O1)+(0,1) -- ++(4,1);
\draw (O1)+(0,-1) -- ++(4,-1);

\draw [red, thick] (O1)+(0.5,-0.25) -- ++(4.5,-0.25);
\draw [blue, dashed] (O1)+(-0.5,0.25) -- ++(3.5,0.25);


\draw (O2)+(0,2) .. controls ++(1,0) and ++(0,-0.3) .. ++(2,2.5);
\draw (O22)+(0,2) .. controls ++(-1,0) and ++(0,-0.3) .. ++(-2,2.5);
\draw [->,>=stealth,thick, red] (O2)+(0,1.97)--++(0.4,1.97);
\draw [->,>=stealth,thick, blue] (O2)+(0,1.97)--++(-0.4,1.97);
\draw [->,>=stealth,thick, red] (O2)+(2,2.5)--++(2,2.9);
\draw [->,>=stealth,thick, blue] (O2)+(2,2.5)--++(2,2.1);
\draw [->,>=stealth,thick, blue] (O22)+(0,1.97)--++(0.4,1.97);
\draw [->,>=stealth,thick, red] (O22)+(0,1.97)--++(-0.4,1.97);

\draw (O2)+(-0.5,0.25) .. controls ++(0,0.4) and ++(-0.2,0) .. ++(0,1);
\draw (O2)+(-0.5,0.25) .. controls ++(0,-0.6) and ++(-0.3,0) .. ++(0,-1);
\draw (O2)+(0.5,-0.25) .. controls ++(0,0.6) and ++(0.3,0) .. ++(0,1);
\draw (O2)+(0.5,-0.25) .. controls ++(0,-0.4) and ++(0.2,0) .. ++(0,-1);

\draw [dashed] (O22)+(-0.5,0.25) .. controls ++(0,0.4) and ++(-0.2,0) .. ++(0,1);
\draw [dashed] (O22)+(-0.5,0.25) .. controls ++(0,-0.6) and ++(-0.3,0) .. ++(0,-1);
\draw (O22)+(0.5,-0.25) .. controls ++(0,0.6) and ++(0.3,0) .. ++(0,1);
\draw (O22)+(0.5,-0.25) .. controls ++(0,-0.4) and ++(0.2,0) .. ++(0,-1);

\draw (O2)+(0,1) -- ++(4,1);
\draw (O2)+(0,-1) -- ++(4,-1);

\draw [red, thick] (O2)+(0.5,-0.25) .. controls ++(1.5,0) and ++(-0.75,0) .. ++(2,1);
\draw [red, dashed] (O22)+(-0.5,0.25) .. controls ++(-1.5,0) and ++(1,0) .. ++(-2,1);
\draw [blue, dashed] (O2)+(-0.5,0.25) .. controls ++(2,0) and ++(-1,0) .. ++(2,-1);
\draw [blue, thick] (O22)+(0.5,-0.25) .. controls ++(-2,0) and ++(1,0) .. ++(-2,-1);


\draw (O3)+(0,2) .. controls ++(1,0) and ++(0,0.3) .. ++(2,1.5);
\draw (O33)+(0,2) .. controls ++(-1,0) and ++(0,0.3) .. ++(-2,1.5);
\draw [->,>=stealth,thick, red] (O3)+(0,2.03)--++(0.4,2.03);
\draw [->,>=stealth,thick, blue] (O3)+(0,2.03)--++(-0.4,2.03);
\draw [->,>=stealth,thick, red] (O3)+(2,1.5)--++(2,1.1);
\draw [->,>=stealth,thick, blue] (O3)+(2,1.5)--++(2,1.9);
\draw [->,>=stealth,thick, blue] (O33)+(0,2.03)--++(0.4,2.03);
\draw [->,>=stealth,thick, red] (O33)+(0,2.03)--++(-0.4,2.03);

\draw (O3)+(-0.5,0.25) .. controls ++(0,0.4) and ++(-0.2,0) .. ++(0,1);
\draw (O3)+(-0.5,0.25) .. controls ++(0,-0.6) and ++(-0.3,0) .. ++(0,-1);
\draw (O3)+(0.5,-0.25) .. controls ++(0,0.6) and ++(0.3,0) .. ++(0,1);
\draw (O3)+(0.5,-0.25) .. controls ++(0,-0.4) and ++(0.2,0) .. ++(0,-1);

\draw [dashed] (O33)+(-0.5,0.25) .. controls ++(0,0.4) and ++(-0.2,0) .. ++(0,1);
\draw [dashed] (O33)+(-0.5,0.25) .. controls ++(0,-0.6) and ++(-0.3,0) .. ++(0,-1);
\draw (O33)+(0.5,-0.25) .. controls ++(0,0.6) and ++(0.3,0) .. ++(0,1);
\draw (O33)+(0.5,-0.25) .. controls ++(0,-0.4) and ++(0.2,0) .. ++(0,-1);

\draw (O3)+(0,1) -- ++(4,1);
\draw (O3)+(0,-1) -- ++(4,-1);

\draw [red, thick] (O3)+(0.5,-0.25) .. controls ++(1.5,0) and ++(-1,0) .. ++(2,-1);
\draw [red, dashed] (O33)+(-0.5,0.25) .. controls ++(-1.5,0) and ++(0.75,0) .. ++(-2,-1);
\draw [blue, dashed] (O3)+(-0.5,0.25) .. controls ++(2,0) and ++(-1,0) .. ++(2,1);
\draw [blue, thick] (O33)+(0.5,-0.25) .. controls ++(-2,0) and ++(1,0) .. ++(-2,1);

\end{tikzpicture}
\caption{Smooth line, cusps, and corresponding links}\label{fig:twist}
\end{figure}

\begin{remark}
\label{rem:move}
The moves of divides with cusps in Figure \ref{fig:moves} 
do not change the isotopy type of the corresponding links. 
The proof is similar to that of \cite[Lemma 1.3]{cou-per}. 
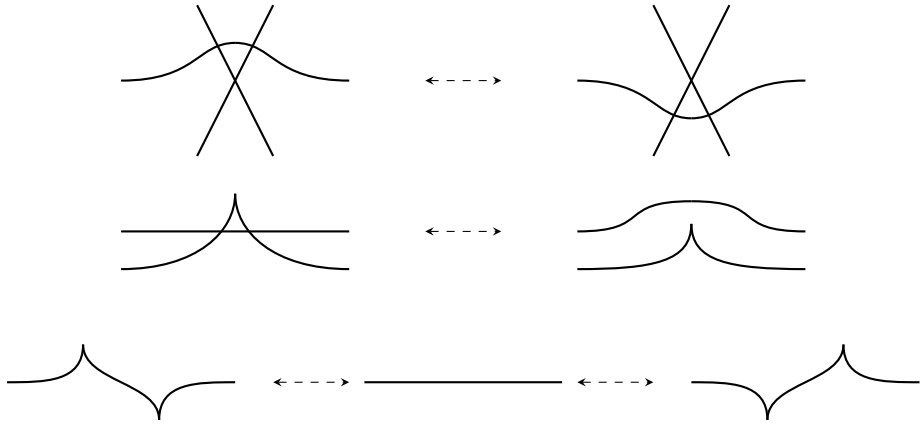
\begin{figure}[htbp]
\centering
\begin{tikzpicture}

\coordinate (R) at (0,4); 
\coordinate (C1) at (0,2);
\coordinate (C2) at (0,0);

\draw[<->, >=stealth, dashed] (R)++(-0.5, 0) -- ++(1,0);
\draw[thick] (R)++(-3.5, -1) -- ++(1, 2);
\draw[thick] (R)++(-3.5, 1) -- ++(1, -2);
\draw[thick] (R)++(-4.5, 0) .. controls ++(1,0) and ++(-0.5,0) .. ++(1.5, 0.5);
\draw[thick] (R)++(-1.5, 0) .. controls ++(-1,0) and ++(0.5,0) .. ++(-1.5, 0.5);

\draw[thick] (R)++(3.5, -1) -- ++(-1, 2);
\draw[thick] (R)++(3.5, 1) -- ++(-1, -2);
\draw[thick] (R)++(4.5, 0) .. controls ++(-1,0) and ++(0.5,0) .. ++(-1.5, -0.5);
\draw[thick] (R)++(1.5, 0) .. controls ++(1,0) and ++(-0.5,0) .. ++(1.5, -0.5);

\draw[<->, >=stealth, dashed] (C1)++(-0.5, 0) -- ++(1,0);
\draw[thick] (C1)++(-4.5,0) --++(3,0); 
\draw[thick] (C1)++(-4.5,-0.5) .. controls ++(1,0) and ++(0,-0.5) .. ++(1.5, 1); 
\draw[thick] (C1)++(-1.5,-0.5) .. controls ++(-1,0) and ++(0,-0.5) .. ++(-1.5, 1); 
\draw[thick] (C1)++(+4.5,0) .. controls ++(-1,0) and ++(1, 0) .. ++(-1.5, 0.4); 
\draw[thick] (C1)++(+1.5,0) .. controls ++(1,0) and ++(-1, 0) .. ++(1.5, 0.4); 
\draw[thick] (C1)++(4.5,-0.5) .. controls ++(-1,0) and ++(0,-0.5) .. ++(-1.5, 0.6); 
\draw[thick] (C1)++(1.5,-0.5) .. controls ++(1,0) and ++(0,-0.5) .. ++(1.5, 0.6); 

\draw[thick] (C2)+(-1.3,0) -- +(1.3,0); 
\draw[<->, >=stealth, dashed] (C2)++(-2.5, 0) -- ++(1,0);
\draw[<->, >=stealth, dashed] (C2)++(1.5, 0) -- ++(1,0);
\draw[thick] (C2)++(-6,0) .. controls ++(0.5,0) and ++(0,-0.5) .. ++(1,0.5);
\draw[thick] (C2)++(-5,0.5) .. controls ++(0,-0.5) and ++(0,0.5) .. ++(1,-1);
\draw[thick] (C2)++(-3,0) .. controls ++(-0.5,0) and ++(0,0.5) .. ++(-1,-0.5);
\draw[thick] (C2)++(6,0) .. controls ++(-0.5,0) and ++(0,-0.5) .. ++(-1,0.5);
\draw[thick] (C2)++(5,0.5) .. controls ++(0,-0.5) and ++(0,0.5) .. ++(-1,-1);
\draw[thick] (C2)++(3,0) .. controls ++(0.5,0) and ++(0,0.5) .. ++(1,-0.5);

\end{tikzpicture}
\caption{Moves of divides with cusps.}\label{fig:moves}
\end{figure}
\end{remark}

\begin{example}
Let $C_1$, $C_2$, and $C_3$ be, respectively, 
a smooth circle, a circle with an outward cusp, 
and a circle with an inward cusp. 
Then $L(C_1)$, $L(C_2)$, and $L(C_3)$ are, respectively, 
the Hopf link, the trivial knot, and the trefoil. 
(Figure \ref{fig:link}.) 

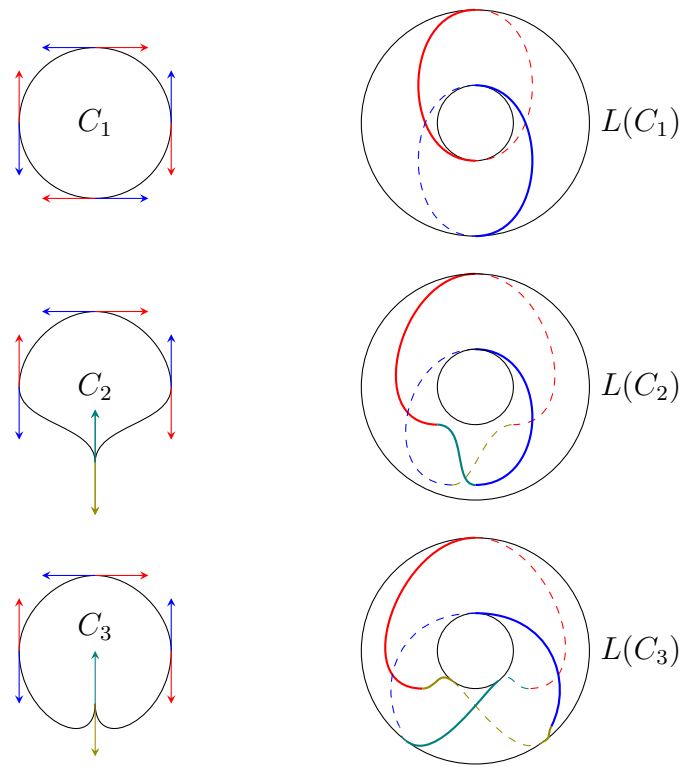
\begin{figure}[htbp]
\centering
\begin{tikzpicture}

\coordinate (O1) at (0,7);
\coordinate (P1) at (5,7);
\coordinate (O2) at (0,3.5);
\coordinate (P2) at (5,3.5);
\coordinate (O3) at (0,0);
\coordinate (P3) at (5,0);

\draw (O1) node {$C_1$}; 
\draw (O1) circle (1);
\draw [->,>=stealth,red] (O1)+(1,0)--++(1,-0.7);
\draw [->,>=stealth,blue] (O1)+(1,0)--++(1,0.7);

\draw [->,>=stealth,red] (O1)+(0,1)--++(0.7,1);
\draw [->,>=stealth,blue] (O1)+(0,1)--++(-0.7,1);

\draw [->,>=stealth,red] (O1)+(-1,0)--++(-1,0.7);
\draw [->,>=stealth,blue] (O1)+(-1,0)--++(-1,-0.7);

\draw [->,>=stealth,blue] (O1)+(0,-1)--++(0.7,-1);
\draw [->,>=stealth,red] (O1)+(0,-1)--++(-0.7,-1);

\draw(P1) circle (0.5);
\draw(P1) circle (1.5);
\draw(P1)+(1.5, 0) node [right] {$L(C_1)$};

\draw[red,thick] (P1)+(0,1.5) .. controls ++(-1,0) and ++(-1,0) .. ++(0,-0.5);
\draw[red, dashed] (P1)+(0,1.5) .. controls ++(1,0) and ++(1,0) .. ++(0,-0.5);

\draw[blue, dashed] (P1)+(0,0.5) .. controls ++(-1,0) and ++(-1,0) .. ++(0,-1.5);
\draw[blue,thick] (P1)+(0,0.5) .. controls ++(1,0) and ++(1,0) .. ++(0,-1.5);

\draw (O2) node {$C_2$}; 
\draw (O2)+(-1,0) .. controls ++(0,0.5) and ++(-0.5,0) .. ++(0,1);
\draw (O2)+(1,0) .. controls ++(0,0.5) and ++(0.5,0) .. ++(0,1);
\draw (O2)+(-1,0) .. controls ++(0,-0.5) and ++(0,0.5) .. ++(0,-1);
\draw (O2)+(1,0) .. controls ++(0,-0.5) and ++(0,0.5) .. ++(0,-1);

\draw [->,>=stealth,red] (O2)+(1,0)--++(1,-0.7);
\draw [->,>=stealth,blue] (O2)+(1,0)--++(1,0.7);

\draw [->,>=stealth,red] (O2)+(0,1)--++(0.7,1);
\draw [->,>=stealth,blue] (O2)+(0,1)--++(-0.7,1);

\draw [->,>=stealth,red] (O2)+(-1,0)--++(-1,0.7);
\draw [->,>=stealth,blue] (O2)+(-1,0)--++(-1,-0.7);

\draw [->,>=stealth,teal] (O2)+(0,-1)--++(0,-0.3);
\draw [->,>=stealth,olive] (O2)+(0,-1)--++(0,-1.7);

\draw[red,thick] (P2)+(0,1.5) .. controls ++(-1,0) and ++(-1,0) .. ++(-0.5,-0.5);
\draw[red, dashed] (P2)+(0,1.5) .. controls ++(1,0) and ++(1,0) .. ++(0.5,-0.5);

\draw[blue, dashed] (P2)+(0,0.5) .. controls ++(-1,0) and ++(-1,0) .. ++(-0.3,-1.3);
\draw[blue,thick] (P2)+(0,0.5) .. controls ++(1,0) and ++(1,0) .. ++(0,-1.3);

\draw[teal, thick] (P2)+(-0.5,-0.5) .. controls ++(0.4,0) and ++(-0.3,0) .. ++(0,-1.3);
\draw[olive, dashed] (P2)+(-0.3,-1.3) .. controls ++(0.3,0) and ++(-0.4,0) .. ++(0.5,-0.5);

\draw [->,>=stealth,teal] (O2)+(0,-1)--++(0,-0.3);
\draw [->,>=stealth,olive] (O2)+(0,-1)--++(0,-1.7);

\draw(P2) circle (0.5);
\draw(P2) circle (1.5);
\draw(P2)+(1.5, 0) node [right] {$L(C_2)$};

\draw (O3) node [above] {$C_3$}; 
\draw (O3)+(-1,0) .. controls ++(0,0.5) and ++(-0.5,0) .. ++(0,1);
\draw (O3)+(1,0) .. controls ++(0,0.5) and ++(0.5,0) .. ++(0,1);
\draw (O3)+(-1,0) .. controls ++(0,-0.9) and ++(0,-0.7) .. ++(0,-0.7);
\draw (O3)+(1,0) .. controls ++(0,-0.9) and ++(0,-0.7) .. ++(0,-0.7);

\draw [->,>=stealth,red] (O3)+(1,0)--++(1,-0.7);
\draw [->,>=stealth,blue] (O3)+(1,0)--++(1,0.7);

\draw [->,>=stealth,red] (O3)+(0,1)--++(0.7,1);
\draw [->,>=stealth,blue] (O3)+(0,1)--++(-0.7,1);

\draw [->,>=stealth,red] (O3)+(-1,0)--++(-1,0.7);
\draw [->,>=stealth,blue] (O3)+(-1,0)--++(-1,-0.7);

\draw [->,>=stealth,teal] (O3)+(0,-0.7)--++(0,0);
\draw [->,>=stealth,olive] (O3)+(0,-0.7)--++(0,-1.4);

\draw(P3) circle (0.5);
\draw(P3) circle (1.5);
\draw(P3)+(1.5, 0) node [right] {$L(C_3)$};

\draw[red, thick] (P3)+(0,1.5) .. controls ++(-1,0) and ++(-1,0) .. ++(-0.7,-0.5);
\draw[red, dashed] (P3)+(0,1.5) .. controls ++(1,0) and ++(1,0) .. ++(0.7,-0.5);

\draw[olive, thick] (P3)+(-0.7,-0.5) .. controls ++(0.2,0) and ++(-0.2,0.15) .. ++(-0.3,-0.4);
\draw[olive, thick] (P3)+(0.9,-1.2) .. controls ++(0.1,0.05) and ++(-0.06,-0.12) .. ++(1,-1);
\draw[olive, dashed] (P3)+(-0.3,-0.4) .. controls ++(0.2,-0.15) and ++(-0.4,-0.3) .. ++(0.9,-1.2);

\draw[blue, dashed] (P3)+(0,0.5) .. controls ++(-1,0) and ++(-0.3,0.6) .. ++(-1,-1);
\draw[blue, thick] (P3)+(0,0.5) .. controls ++(1,0) and ++(0.3,0.6) .. ++(1,-1);

\draw[teal, dashed] (P3)+(0.7,-0.5) .. controls ++(-0.2,0) and ++(0.2,0.15) .. ++(0.3,-0.4);
\draw[teal, dashed] (P3)+(-0.9,-1.2) .. controls ++(-0.1,0.05) and ++(0.06,-0.12) .. ++(-1,-1);
\draw[teal, thick] (P3)+(0.3,-0.4) .. controls ++(-0.2,-0.15) and ++(0.4,-0.3) .. ++(-0.9,-1.2);

\end{tikzpicture}
\caption{Examples of $L(C)$ (drawn on the torus)}\label{fig:link}
\end{figure}
\end{example}


\section{Handle decomposition of $M(\A)$}

\label{sec:handle}






\subsection{Setting}
\label{sec:setting}

A real line arrangement $\A =\{H_1, \dots, H_n\}$ is a 
finite set of affine lines in the affine plane 
$\R^2$. Each line is defined by some affine 
linear form 
\begin{equation}
\label{eq:defining}
\alpha_H(x_1, x_2)=ax_1+bx_2+c=0, 
\end{equation}
with $a, b, c\in\R$ and $(a, b)\neq (0,0)$. 
A connected component of $\R^2\smallsetminus\bigcup_{H\in\A}H$ 
is called a chamber. The set of all chambers is denoted by 
$\ch(\A)$. The affine linear equation (\ref{eq:defining}) 
defines a complex line 
$\{(z_1, z_2)\in\C^2\mid az_1+bz_2+c=0\}$ in $\C^2$. 
We denote the set of complexified lines by 
$\A_\C=\{H_\C=H\otimes\C\mid H\in\A\}$. 
The object of our interest is the complexified 
complement $M(\A)=\C^2\smallsetminus\bigcup_{H\in\A}H_\C$. 

For a point $p\in\R^2$, let $\A_p:=\{H\in\A\mid H\ni p\}$. 
By the identification 
$T\R^2\simeq\C^2$, 
$(\bm{x}, \bm{y}\in T_{\bm{x}}\R^2)\longmapsto \bm{x}+\sqrt{-1}\cdot\bm{y}$, 
the complexified complement $M(\A)$ is the set of all tangent vectors in $\R^2$ 
which are tangent to none of $H\in\A$, namely, 
\begin{equation}
\label{eq:compl}
M(\A)=\{(\bm{x}, \bm{y})\mid \bm{x}\in\R^2, \bm{y}\in T_{\bm{x}}\R^2, \mbox{ if 
$H\in\A_{\bm{x}}$, then $\bm{y}\notin T_{\bm{x}}H$}
\}. 
\end{equation}
Based on this description, we have the following. 

\begin{proposition}
\label{prop:easy}
\begin{itemize}
\item[(1)] 
Let $(\bm{x}, \bm{y})\in M(\A)$. 
Then for any $t\in\R$, $(\bm{x}+t\bm{y}, \bm{y})\in M(\A)$. 
\item[(2)] Suppose $\bm{y}$ is not parallel to any of $H\in\A$. 
Then for any $\bm{x}\in\R^2$, $(\bm{x}, \bm{y})\in M(\A)$. 
\end{itemize}
\end{proposition}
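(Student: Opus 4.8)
The plan is to argue directly from the explicit description (\ref{eq:compl}) of $M(\A)$, after rephrasing membership in terms of the defining forms. Writing $\alpha_H(x_1,x_2)=ax_1+bx_2+c$ and letting $\ell_H(\bm{y}):=ay_1+by_2$ denote its homogeneous linear part, the condition $\bm{y}\in T_{\bm{x}}H$ is exactly $\ell_H(\bm{y})=0$, while $H\in\A_{\bm{x}}$ is exactly $\alpha_H(\bm{x})=0$. Thus (\ref{eq:compl}) says that $(\bm{x},\bm{y})\in M(\A)$ if and only if for every $H\in\A$ the pair $(\alpha_H(\bm{x}),\ell_H(\bm{y}))$ is not $(0,0)$. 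Equivalently, extending $\alpha_H$ to $\C^2$ and using the identification $T\R^2\simeq\C^2$, this reads $\alpha_H(\bm{x}+\sqrt{-1}\bm{y})=\alpha_H(\bm{x})+\sqrt{-1}\,\ell_H(\bm{y})\neq 0$.

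Granting this reformulation, part (2) is immediate: if $\bm{y}$ is not parallel to any $H$, then $\ell_H(\bm{y})\neq 0$ for every $H$, so the second coordinate of $(\alpha_H(\bm{x}),\ell_H(\bm{y}))$ never vanishes and the pair is never $(0,0)$, whatever $\bm{x}$ is. Hence $(\bm{x},\bm{y})\in M(\A)$ for all $\bm{x}\in\R^2$.

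For part (1) I would restrict attention to the complex line $\{\bm{x}+s\bm{y}\mid s\in\C\}$ through $\bm{x}$ in the real direction $\bm{y}$, on which $\alpha_H$ becomes the affine function $\alpha_H(\bm{x}+s\bm{y})=\alpha_H(\bm{x})+s\,\ell_H(\bm{y})$ of $s$. The point $\bm{x}+t\bm{y}+\sqrt{-1}\bm{y}$ corresponds to the parameter $s=t+\sqrt{-1}$. The key observation is that this complex line meets $H_\C$ only at real parameter values: if $\ell_H(\bm{y})\neq 0$ the unique root is $s_0=-\alpha_H(\bm{x})/\ell_H(\bm{y})\in\R$, because both $\alpha_H(\bm{x})$ and $\ell_H(\bm{y})$ are real, so $s_0\neq t+\sqrt{-1}$; and if $\ell_H(\bm{y})=0$, then the hypothesis $(\bm{x},\bm{y})\in M(\A)$ forces $\alpha_H(\bm{x})\neq 0$, so the whole line avoids $H_\C$. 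Either way $\bm{x}+t\bm{y}+\sqrt{-1}\bm{y}\notin H_\C$ for every real $t$, which is the claim.

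There is no serious obstacle here; the single point worth isolating is that the reality of the arrangement makes the relevant intersection parameter $s_0$ real, so the slice $\operatorname{Im}(s)=1$ automatically misses every $H_\C$. I expect a purely computational alternative to work just as well: substituting $(\bm{x}+t\bm{y})+\sqrt{-1}\bm{y}$ and separating the real and imaginary parts of $\alpha_H$, which read $\alpha_H(\bm{x})+t\,\ell_H(\bm{y})$ and $\ell_H(\bm{y})$ respectively, the vanishing of the imaginary part already forces $\ell_H(\bm{y})=0$ and then the real part forces $\alpha_H(\bm{x})=0$, contradicting membership.
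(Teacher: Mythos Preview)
Your proposal is correct and matches the paper's argument in substance. The paper's proof of (1) is precisely your ``purely computational alternative'': assuming $(\bm{x}+t\bm{y},\bm{y})\in H_\C$, one reads off that $\bm{y}$ is parallel to $H$ and then that $\bm{x}\in H$, contradicting the hypothesis; part (2) is declared obvious from (\ref{eq:compl}). Your complex-line parametrization is just a geometric rewrapping of the same two-case check on $\ell_H(\bm{y})$, so there is no genuine methodological difference.
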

\begin{proof}
(1) Assume $(\bm{x}+t\bm{y}, \bm{y})\in H_{\C}$ for some $H\in\A$. 
Then $\bm{x}+t\bm{y}\in H$ and $\bm{y}\in T_{\bm{x}+t\bm{y}}H$ hold. 
Then $\bm{y}$ is parallel to $H$, and $\bm{x}\in H$. 
Then $(\bm{x}, \bm{y})\in H$ which contradicts $(\bm{x}, \bm{y})\in M(\A)$. 

(2) is obvious from the description of (\ref{eq:compl}). 
\end{proof}

Now let us fix a generic line $F$ such that 
$F$ does not separate intersections of $\A$. 
We can choose coordinates $x_1, x_2$ so that 
$F$ is given by $x_2=0$ and all intersections of $\A$ 
are contained in the upper half-plane 
$\{(x_1, x_2)\in\R^2\mid x_2>0\}$. 
Let 
\[
\ch_F(\A):=\{C\in\ch(\A)\mid C\cap F=\emptyset\}. 
\]
We set $H_i\cap F$ has coordinates $(a_i, 0)$. 
By changing the numbering of lines and signs of 
the defining equation $\alpha_i$ of $H_i\in\A$ 
we may assume that 
\begin{itemize}
\item $a_1<a_2<\dots<a_n$, and 
\item the half-plane $H_i^-=\{\alpha_i<0\}$ contains the negative 
direction (leftward) of $F$ (equivalently, $H_i^+=\{\alpha_i>0\}$ contains 
the positive direction).  
\end{itemize}
Let us recall the construction in \cite{yos-lef} briefly. 
There exists a (after suitable compactification near the boundary) 
Morse function $\varphi:M(\A)\longrightarrow\R_{\geq 0}$ 
such that 
\begin{itemize}
\item $\varphi^{-1}(0)=M(\A)\cap F_{\C}$, 
\item for each $C\in\ch_F(\A)$ there is a unique critical point 
$p_C\in C$,  
\item the chamber $C$ is the stable manifold of $p_C$, 
\item there are no other critical points. 
\end{itemize}
Based on these facts, it was proved that 
$M(\A)\smallsetminus\bigsqcup_{C\in\ch_F(\A)}C$ is 
homeomorphic to an open neighborhood of $F_\C\cap M(\A)$. 
The homeomorphism/retraction was originally constructed by 
using the gradient flow (\cite[Corollary 5.1.5]{yos-lef}). 
Below we construct a piecewise linear retraction. 


After appropriate changes of coordinates, we may further assume the following. 
(See Figure \ref{fig:setting}.)
\begin{itemize}
\item 
There exists a positive real number $R_0>0$ such that all the intersection 
points of $\A$ are lying in $\{(x_1, x_2)\mid 1<x_2<R_0\}$. 
\item 
Denote by $\theta_i$ the angle between the $x_1$-axis (from the positive side) and 
$H_i$. Then 
$\frac{\pi}{4}\leq \theta_1\leq\theta_2\leq\dots\leq\theta_n\leq\frac{3\pi}{4}$. 
\item 
Take a sufficiently large real number $R>0$ ($R\gg R_0$) so that 
all intersections $H_i\cap \{x_2=R_0\}$, $i=1, \dots, n$, are contained in 
$\{-R+R_0<x_1<R-R_0\}$. 
\end{itemize}

\begin{figure}[htbp]
\centering
\begin{tikzpicture}

\coordinate (a1) at (-1,0);
\coordinate (a2) at (0,0);
\coordinate (a3) at (1,0);

\draw[dashed] (-7,0) -- (7,0);
\draw (6,0.3) node[right] {\small $F=\{x_2=0\}$};

\draw[loosely dashed] (-5,-1) rectangle (5,4.5);
\draw[red] (-5,-1) node [below] {\small $\Rect(R, 1)$} rectangle (5,1);

\fill[black] (a1) node [below right] {\small $a_1$} circle(0.06);
\draw [thick] (a1) -- ++(75:5.2cm) node [right] {$H_1$};
\draw [thick] (a1) -- ++(255:1.3cm);
\draw (a1)+(0.3,0)  arc (0:75:0.3) node [above right] {\footnotesize $\theta_1$};

\fill[black] (a2) node [below right] {\small $a_2$} circle(0.06);
\draw [thick] (a2) -- ++(90:5cm) node [left] {$H_2$};
\draw [thick] (a2) -- ++(270:1.2cm);
\draw (a2)+(0.3,0)  arc (0:90:0.3) node [above right] {\footnotesize $\theta_2$};

\fill[black] (a3) node [below right] {\small $a_3$} circle(0.06);
\draw [thick] (a3) -- ++(115:5.4cm) node [left] {$H_3$};
\draw [thick] (a3) -- ++(295:1.3cm);
\draw (a3)+(0.3,0)  arc (0:115:0.3) node [above right] {\footnotesize $\theta_3$};

\draw (5,3) node [right] {\small $\{x_1=R\}$};
\draw (-5,3) node [left] {\small $\{x_1=-R\}$};
\draw (3,4.5) node [above] {\small $\{x_2=R_0\}$};
\draw (3,1) node [above] {\small $\{x_2=1\}$};
\draw (3,-1) node [below] {\small $\{x_2=-1\}$};

\draw[dashed, blue] (-5.4, -0.5) node [left] {\small $x_2=x_1+R$} -- ++(4.4, 5.5);
\draw[dashed, blue] (5.4, -0.5) node [right] {\small $x_2=-x_1+R$} -- ++(-4.4, 5.5);

\end{tikzpicture}
\caption{Setting}\label{fig:setting}
\end{figure}

Let $\widetilde{H}_{\C}$ be a small tubular neighborhood of $H_{\C}$. 
Define $M_1$ as 
\begin{equation}
M_1=
\left(\C^2\smallsetminus\bigcup_{H\in\A}\widetilde{H}_{\C}\right)\cap 
\{(\bm{x}, \bm{y})\mid\bm{x}\in[-R, R]\times [-1, R_0], ||\bm{y}||_{\infty}\leq R\}, 
\end{equation}
which is a compact $4$-dimensional manifold with boundary whose 
interior is homeomorphic to $M(\A)$ (see \cite{dur}). 

Next, we remove tubular neighborhoods of each chamber $C$. 
Let $\widetilde{C}$ be the tubular neighborhood of $C$ in $M(\A)$. 
Define $M_2$ as 
\begin{equation}
M_2=
\left(
M_1\smallsetminus \bigcup_{C\in\ch(\A)}\widetilde{C}
\right)
\cup
\{(\bm{x}, \bm{y})\mid\bm{x}\in\Rect(R, 1), ||\bm{y}||_{\infty}\leq R\}
\smallsetminus\bigcup_{H\in\A}\widetilde{H}_{\C}. 
\end{equation}
Note that if $C\cap F\neq\emptyset$, then the addition of 
$\widetilde{C}$ does not change the topology. 
On the other hand, $\widetilde{C}$ for 
$C\in\ch_F(\A)$ corresponds to a $2$-handle. 
Hence $M(\A)$ is obtained from $M_2$ by attaching $2$-handles 
$\widetilde{C}$ for each $C\in\ch_F(\A)$. 

Let $\rho:[1, R_0]\stackrel{\simeq}{\longrightarrow}[0, 1]$ be the function 
defined by 
$\rho(h)=\frac{h-1}{R_0-1}$. Next, we define 
\begin{equation}
M_3=
\left\{(\bm{x}, \bm{y})\in M_2\left|\ 
\begin{split}
&\mbox{If $x_2\geq 1$, then}\\
&x_2\leq x_1+R, x_2\leq -x_1+R, \\
&\mbox{and }\rho(x_2)\leq ||\bm{y}||_{\infty}\leq 1
\end{split}
\right.\right\}. 
\end{equation}
Note that the inequalities $x_2\leq x_1+R, x_2\leq -x_1+R$ mean that 
the real part of the point is below the blue dashed lines in Figure \ref{fig:setting}. 
Since $M_3$ is obtained from $M_2$ just by deforming near the boundary, 
$M_2$ and $M_3$ are homeomorphic. 
Let us define $M_4$ as 
\begin{equation}
M_4:=
\{(\bm{x}, \bm{y})\mid \bm{x}\in \Rect(R, 1), \bm{y}\in T_{\bm{x}}\R^2, 
||\bm{y}||_\infty\leq\delta(\bm{x})\}\smallsetminus\bigcup_{H\in\A}\widetilde{H}_{\C}. 
\end{equation}
(c. f. Figure \ref{fig:M1234}.) 

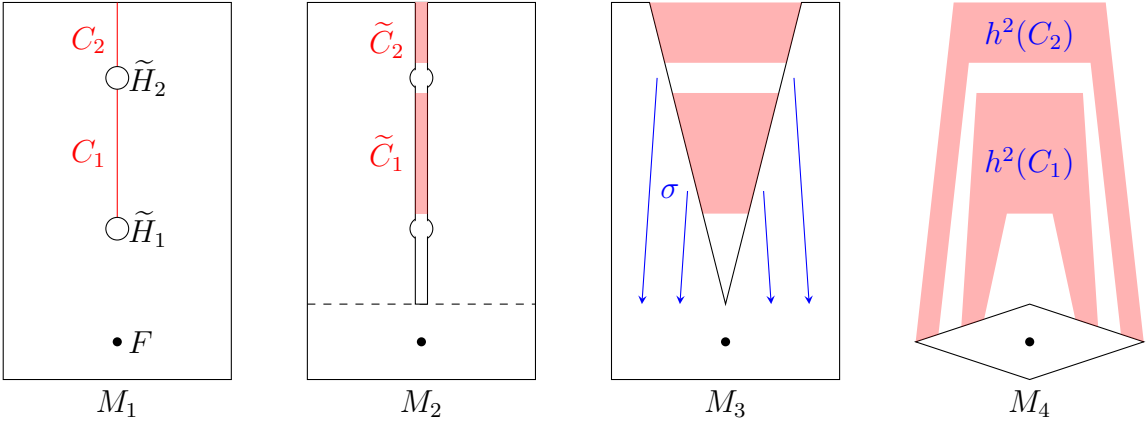
\begin{figure}[htbp]
\centering
\begin{tikzpicture}

\coordinate (M1) at (0,1);
\coordinate (M2) at (4,1);
\coordinate (M3) at (8,1);
\coordinate (M4) at (12,1);


\draw (M1) rectangle ++(3,5);
\fill (M1)+(1.5, 0.5) circle (0.06cm) node [right] {$F$};
\draw[red] (M1)+(1.5, 2)-- node [left] {$C_1$} ++(1.5, 4);
\draw[red] (M1)+(1.5, 4)-- node [left] {$C_2$} ++(1.5, 5);
\filldraw[fill=white, draw=black] (M1)+(1.5, 2) circle (0.15cm) node [right] {$\widetilde{H}_1$};
\filldraw[fill=white, draw=black] (M1)+(1.5, 4) circle (0.15cm) node [right] {$\widetilde{H}_2$};
\draw (M1)+(1.5, 0) node [below] {$M_1$};

\draw (M2) rectangle ++(3,5);
\draw[dashed] (M2)+(0,1)--++(3,1);
\fill (M2)+(1.5, 0.5) circle (0.06cm);
\draw (M2)+(1.5, 2) circle (0.15cm);
\draw (M2)+(1.5, 4) circle (0.15cm);
\filldraw[fill=white, draw=black] (M2)+(1.42, 1) rectangle ++(1.58, 5);
\draw[white] (M2)+(1.42, 5)--++(1.58, 5);
\fill[white] (M2)+(1.5, 2) circle (0.13cm);
\fill[white] (M2)+(1.5, 4) circle (0.13cm);
\fill[red, opacity=.3] (M2)+(1.42, 2.2) rectangle ++(1.58, 3.8);
\draw[red] (M2)+(1.42, 3) node [left] {$\widetilde{C}_1$};
\fill[red, opacity=.3] (M2)+(1.42, 4.2) rectangle ++(1.58, 5);
\draw[red] (M2)+(1.42, 4.5) node [left] {$\widetilde{C}_2$};
\draw (M2)+(1.5, 0) node [below] {$M_2$};

\fill (M3)+(1.5, 0.5) circle (0.06cm);
\draw (M3) -- ++(3,0) --++(0,5) -- ++(-0.5, 0) -- ++(-1, -4) -- ++(-1, 4) -- ++(-0.5, 0) -- cycle;

\fill[red, opacity=.3] (M3)+(0.5, 5) -- ++ (0.7, 4.2) -- ++ (1.6, 0) -- ++ (0.2, 0.8) -- cycle;

\fill[red, opacity=.3] (M3)+(0.8, 3.8) -- ++ (1.2, 2.2) -- ++ (0.6, 0) -- ++ (0.4, 1.6) -- cycle;

\draw [->,>=stealth, blue] (M3)+(0.6,4)-- node [right] {$\sigma$} ++(0.4,1);
\draw [->,>=stealth, blue] (M3)+(1,2.5)--  ++(0.9,1);
\draw [->,>=stealth, blue] (M3)+(2.4,4)--  ++(2.6,1);
\draw [->,>=stealth, blue] (M3)+(2,2.5)--  ++(2.1,1);

\draw (M3)+(1.5, 0) node [below] {$M_3$};

\fill (M4)+(1.5, 0.5) circle (0.06cm);
\draw (M4)+(0,0.5) -- ++(1.5, 0) -- ++ (1.5, 0.5) -- ++ (-1.5, 0.5) -- cycle; 


\fill[red, opacity=.3] (M4)+(0, 0.5) -- ++ (0.5, 5) -- ++ (2, 0) -- ++ (0.5, -4.5) -- ++(-0.3, 0.1) -- ++(-0.4, 3.6) -- ++(-1.6, 0) -- ++(-0.4, -3.6) -- cycle;

\fill[red, opacity=.3] (M4)+(0.6, 0.7) -- ++ (0.8, 3.8) -- ++ (1.4, 0) -- ++ (0.2, -3.1) -- ++ (-0.3, 0.1) -- ++ (-0.3, 1.4) -- ++(-0.6, 0)-- ++(-0.3, -1.4) -- cycle;


\draw (M4)+(1.5, 0) node [below] {$M_4$};
\draw[blue] (M4)+(1.5, 4.2) node [above] {$h^2(C_2)$};
\draw[blue] (M4)+(1.5, 2.5) node [above] {$h^2(C_1)$};

\end{tikzpicture}
\caption{$M_1, M_2 , M_3$, and $M_4$ (and attached $2$-handles).}\label{fig:M1234}

\end{figure}

Now we will construct an explicit contraction from $M_3$ to $M_4$. 

\subsection{Explicit contraction}

As we noted, $M_2, M_3$, and $M_4$ are homeomorphic. 
Define the map  $\sigma:M_3\longrightarrow M_4$ by 
\begin{equation}
\sigma(\bm{x}, \bm{y}):=
\begin{cases}
\left(\left(x_1-\frac{y_1}{y_2}(x_2-1+||\bm{y}||_{\infty}), 1- ||\bm{y}||_{\infty}\right), \bm{y}\right),  &\mbox{ if $x_2\geq 1-||\bm{y}||_{\infty}$ and $|y_1|\leq |y_2|$}, \\
\left(\left(x_1-\frac{y_2}{y_1}(x_2-1+||\bm{y}||_{\infty}), 1- ||\bm{y}||_{\infty}\right), \bm{y}\right),  &\mbox{ if $x_2\geq 1-||\bm{y}||_{\infty}$ and $|y_1|\geq |y_2|$}, \\
(\bm{x}, \bm{y})& \mbox{ if $x_2\leq 1-||\bm{y}||_{\infty}$,} 
\end{cases}
\end{equation}
where $\bm{x}=(x_1, x_2)$ and $\bm{y}=(y_1, y_2)$ (Figure \ref{fig:sigma}). 
Note that 
the first case is for the edges $R_1$ and $R_3$, 
the second case is for the edges $R_2$ and $R_4$, and 
the third case is for $(\bm{x}, \bm{y})\in M_4$. 
\begin{figure}[htbp]
\centering
\begin{tikzpicture}

 \fill [black] (-8.5,2.5) circle (0.06);
 \coordinate [label = above right:$P$] (Q) at (-8.5,2.5);
 \draw [thick](-9,2) -- (-8,2);
 \coordinate [label = below : $R_1$] (R) at (-8.5,1.75);
 
 \draw[->,>=stealth] (Q) -- (-9,2);
 \draw[->,>=stealth] (Q) -- (-26/3,2);
 \draw[->,>=stealth] (Q) -- (-25/3,2);
 \draw[->,>=stealth] (Q) -- (-8,2);

 \coordinate[label = above:$\sigma$] (S) at (-6,2.6);
 \draw [->,>=stealth] (-7,2.5) -- (-5,2.5);

 \draw [domain=-1.5:3] plot(\x , \x+1); 
 \draw [domain=1.5:-3] plot(\x , -\x+1); 
 \draw [thick](-4.5,0) -- (4.5,0);
 \fill [black] (0, 3.5) circle(0.06);
 \coordinate[label=above right:$P$] (P) at (0,3.5);
 
 \coordinate (A) at (-1/2,3);
 \coordinate (B) at (-1/6,3);
 \coordinate (C) at (1/6,3);
 \coordinate (D) at (1/2,3);
 
 \draw [->,>=stealth] (P)--(A); 
 \draw [->,>=stealth] (P)--(B);
 \draw [->,>=stealth] (P)--(C);
 \draw [->,>=stealth] (P)--(D);
 \draw [thick](A) -- (D);
 
 \coordinate (E) at (-3.5,0);
 \coordinate (F) at (-7/6,0);
 \coordinate (G) at (7/6,0);
 \coordinate (H) at (3.5,0);

 \draw [dashed] (A) -- (E);
 \draw [dashed] (B) -- (F);
 \draw [dashed] (C) -- (G);
 \draw [dashed] (D) -- (H);
 
 \fill [black] (E) circle(0.06);
 \fill [black] (F) circle(0.06);
 \fill [black] (G) circle(0.06);
 \fill [black] (H) circle(0.06);
 \draw [->,>=stealth] (E)--(-4,-1/2);
 \draw [->,>=stealth] (F)--(-4/3,-1/2);
 \draw [->,>=stealth] (G)--(4/3,-1/2);
 \draw [->,>=stealth] (H)--(4,-1/2);
\end{tikzpicture}


\begin{tikzpicture}
 \fill [black] (-8.5,2.5) circle (0.06);
 \coordinate [label = above left:$P$] (Q) at (-8.5,2.5);
 \draw [thick](-8,2) -- (-8,3);
 \coordinate [label = below : $R_2$] (R) at (-8.5,1.75);
 
 \draw[->,>=stealth] (Q) -- (-8,3);
 \draw[->,>=stealth] (Q) -- (-8,8/3);
 \draw[->,>=stealth] (Q) -- (-8,7/3);
 \draw[->,>=stealth] (Q) -- (-8,2);

 \coordinate[label = above:$\sigma$] (S) at (-6,2.6);
 \draw [->,>=stealth] (-7,2.5) -- (-5,2.5);

 \draw [domain=-1.5:3] plot(\x , \x+1); 
 \draw [domain=1.5:-3] plot(\x , -\x+1); 
 \draw [thick](-4.5,0) -- (4.5,0);
 \fill [black] (0, 3.5) circle(0.06);
 \coordinate[label=above left:$P$] (P) at (0,3.5);
 
 \coordinate (A) at (1/2,3);
 \coordinate [label = right:$A$](B) at (1/2,10/3);
 \coordinate [label = right:$B$](C) at (1/2,11/3);
 \coordinate (D) at (1/2,4);
 
 \draw [->,>=stealth] (P)--(A); 
 \draw [->,>=stealth] (P)--(B);
 \draw [->,>=stealth] (P)--(C);
 \draw [->,>=stealth] (P)--(D);
 \draw [thick](A) -- (D);
 
 \coordinate (E) at (-3.5,0);
 \coordinate [label =above left:$\sigma(B)$](F) at (-7/6,0);
 \coordinate [label =above right:$\sigma(A)$](G) at (7/6,0);
 \coordinate (H) at (3.5,0);

 \draw [dashed] (P) -- (E);
 \draw [dashed] (P) -- (F);
 \draw [dashed] (P) -- (G);
 \draw [dashed] (P) -- (H);
 
 \fill [black] (E) circle(0.06);
 \fill [black] (F) circle(0.06);
 \fill [black] (G) circle(0.06);
 \fill [black] (H) circle(0.06);
 \draw [->,>=stealth] (E)--(-3,1/2);
 \draw [->,>=stealth] (F)--(-2/3,1/6);
 \draw [->,>=stealth] (G)--(5/3,-1/6);
 \draw [->,>=stealth] (H)--(4,-1/2);
 
 \coordinate (I) at (-1/2,2);
 \coordinate (J) at (1/2,2);
 \fill [black] (I) circle (0.06);
 \fill [black] (J) circle (0.06);
 
 \draw [->,>=stealth] (I)--(0,13/6);
 \draw [->,>=stealth] (J)--(1,11/6);
\end{tikzpicture}


\begin{tikzpicture}
 \fill [black] (-8.5,2.5) circle (0.06);
 \coordinate [label = below right:$P$] (Q) at (-8.5,2.5);
 \draw [thick](-9,3) -- (-8,3);
 \coordinate [label = below : $R_3$] (R) at (-8.5,1.75);
 
 \draw[->,>=stealth] (Q) -- (-9,3);
 \draw[->,>=stealth] (Q) -- (-26/3,3);
 \draw[->,>=stealth] (Q) -- (-25/3,3);
 \draw[->,>=stealth] (Q) -- (-8,3);

 \coordinate[label = above:$\sigma$] (S) at (-6,2.6);
 \draw [->,>=stealth] (-7,2.5) -- (-5,2.5);

 \draw [domain=-1.5:3] plot(\x , \x+1); 
 \draw [domain=1.5:-3] plot(\x , -\x+1); 
 \draw [thick](-4.5,0) -- (4.5,0);
 \fill [black] (0, 3.5) circle(0.06);
 \coordinate[label= right:$P$] (P) at (0,3.5);
 
 \coordinate (A) at (-1/2,4);
 \coordinate (B) at (-1/6,4);
 \coordinate (C) at (1/6,4);
 \coordinate (D) at (1/2,4);
 
 \draw [->,>=stealth] (P)--(A); 
 \draw [->,>=stealth] (P)--(B);
 \draw [->,>=stealth] (P)--(C);
 \draw [->,>=stealth] (P)--(D);
 \draw [thick](A) -- (D);
 
 \coordinate (E) at (-3.5,0);
 \coordinate (F) at (-7/6,0);
 \coordinate (G) at (7/6,0);
 \coordinate (H) at (3.5,0);

 \draw [dashed] (P) -- (E);
 \draw [dashed] (P) -- (F);
 \draw [dashed] (P) -- (G);
 \draw [dashed] (P) -- (H);
 
 \fill [black] (E) circle(0.06);
 \fill [black] (F) circle(0.06);
 \fill [black] (G) circle(0.06);
 \fill [black] (H) circle(0.06);
 \draw [->,>=stealth] (E)--(-3,1/2);
 \draw [->,>=stealth] (F)--(-1,1/2);
 \draw [->,>=stealth] (G)--(1,1/2);
 \draw [->,>=stealth] (H)--(3,1/2);
\end{tikzpicture}


\begin{tikzpicture}
 \fill [black] (-8.5,2.5) circle (0.06);
 \coordinate [label = above right:$P$] (Q) at (-8.5,2.5);
 \draw [thick](-9,2) -- (-9,3);
 \coordinate [label = below : $R_4$] (R) at (-8.5,1.75);
 
 \draw[->,>=stealth] (Q) -- (-9,2);
 \draw[->,>=stealth] (Q) -- (-9,7/3);
 \draw[->,>=stealth] (Q) -- (-9,8/3);
 \draw[->,>=stealth] (Q) -- (-9,3);

 \coordinate[label = above:$\sigma$] (S) at (-6,2.6);
 \draw [->,>=stealth] (-7,2.5) -- (-5,2.5);

 \draw [domain=-1.5:3] plot(\x , \x+1); 
 \draw [domain=1.5:-3] plot(\x , -\x+1); 
 \draw [thick](-4.5,0) -- (4.5,0);
 \fill [black] (0, 3.5) circle(0.06);
 \coordinate[label=above right:$P$] (P) at (0,3.5);
 
 \coordinate (A) at (-1/2,3);
 \coordinate [label = left:$A$](B) at (-1/2,10/3);
 \coordinate [label = left:$B$](C) at (-1/2,11/3);
 \coordinate (D) at (-1/2,4);
 
 \draw [->,>=stealth] (P)--(A); 
 \draw [->,>=stealth] (P)--(B);
 \draw [->,>=stealth] (P)--(C);
 \draw [->,>=stealth] (P)--(D);
 \draw [thick](A) -- (D);
 
 \coordinate (E) at (-3.5,0);
 \coordinate [label =above left:$\sigma(A)$](F) at (-7/6,0);
 \coordinate [label =above right:$\sigma(B)$](G) at (7/6,0);
 \coordinate (H) at (3.5,0);

 \draw [dashed] (P) -- (E);
 \draw [dashed] (P) -- (F);
 \draw [dashed] (P) -- (G);
 \draw [dashed] (P) -- (H);
 
 \fill [black] (E) circle(0.06);
 \fill [black] (F) circle(0.06);
 \fill [black] (G) circle(0.06);
 \fill [black] (H) circle(0.06);
 \draw [->,>=stealth] (E)--(-4,-1/2);
 \draw [->,>=stealth] (F)--(-5/3,-1/6);
 \draw [->,>=stealth] (G)--(2/3,1/6);
 \draw [->,>=stealth] (H)--(3,1/2);
 
 \coordinate (I) at (-1/2,2);
 \coordinate (J) at (1/2,2);
 \fill [black] (I) circle (0.06);
 \fill [black] (J) circle (0.06);
 
 \draw [->,>=stealth] (I)--(-1,11/6);
 \draw [->,>=stealth] (J)--(0,13/6);
\end{tikzpicture}
\caption{$\sigma:M_3 \rightarrow M_4$}\label{fig:sigma}

\end{figure}
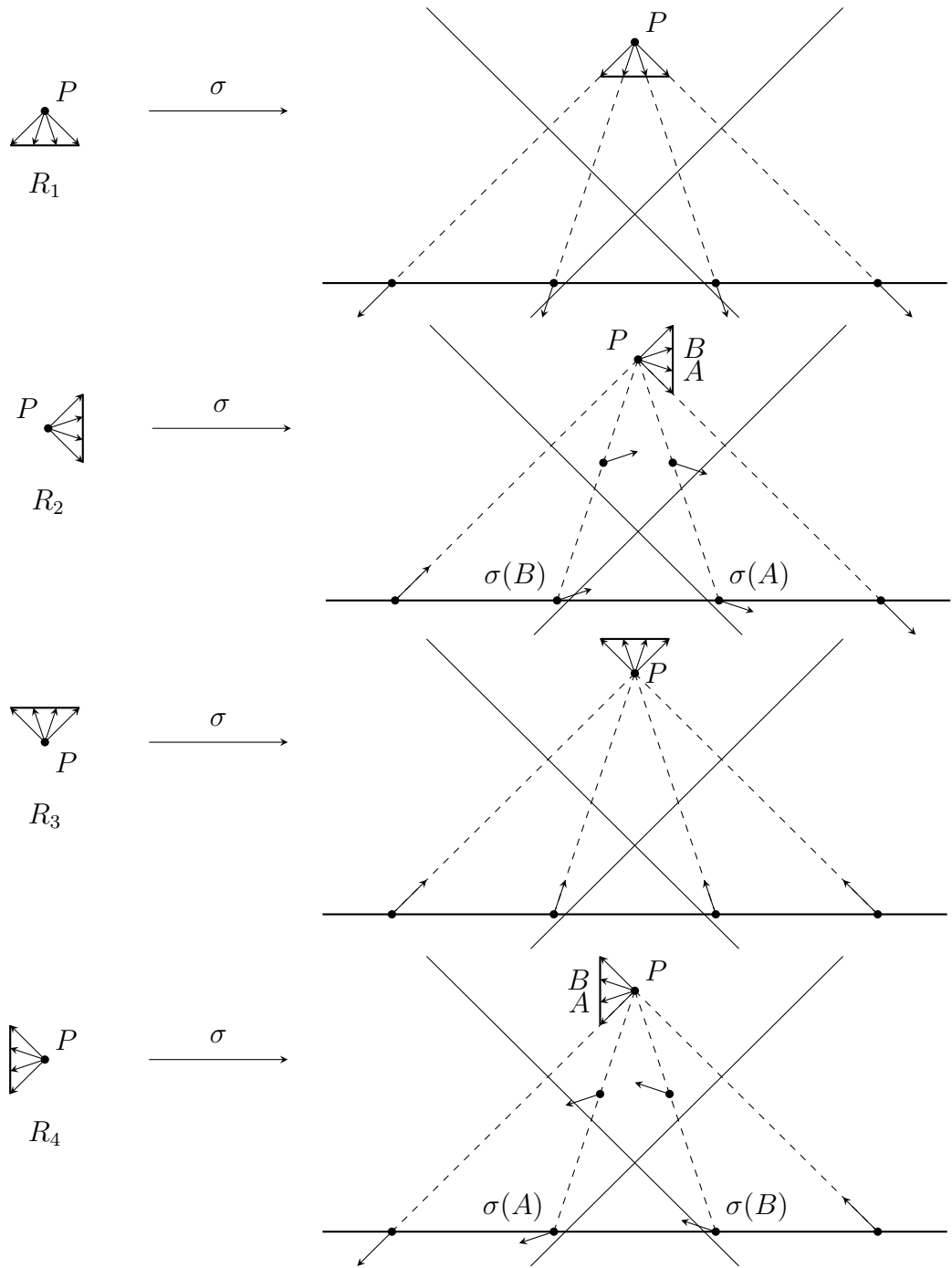

Let $\sigma_t(\bm{x}, \bm{y})=(1-t)\cdot (\bm{x}, \bm{y})+
t\cdot\sigma(\bm{x}, \bm{y})$. It is straightforward that 
if $(\bm{x}, \bm{y})\in M_3$, 
then $\sigma_t(\bm{x}, \bm{y})\in M_3$ for $0\leq t\leq 1$. 
Since $\sigma_0$ is the identity on $M_3$ and 
$\sigma_1=\sigma$, $\sigma_t$ ($0\leq t\leq 1$) 
gives a contraction of $M_3$ to $M_4$. 

\subsection{Attaching maps of $2$-handles}

In this section, we describe the Kirby diagrams  
(the framed links of attaching circles and the boundaries of carved disks) 
associated with the handle decompositions obtained in the previous section. 
Our $0$-handle, namely $D^4$, is the piecewise linear disk that we introduced in 
\S \ref{sec:sphere} 
\begin{equation}
\label{eq:D4}
\disk(R, 1)\approx 
\{(\bm{x}, \bm{y})\mid \bm{x}\in \Rect(R, 1), \bm{y}\in T_{\bm{x}}\R^2, 
||\bm{y}||_\infty\leq\delta(\bm{x})\}. 
\end{equation}
Note that $\partial \disk(R, 1)=\sphere(R, 1)$ and $\disk(R, 1)\cap M_1=M_4$. 

Next, we consider $1$-handlebody. Note that the intersection 
$\disk(R, 1)\cap M_1$ is equal to 
$\disk(R, 1)\smallsetminus\bigsqcup_{i=1}\widetilde{H}_{i, \C}$. 
The intersection 
$\disk(R, 1)\cap H_{i, \C}$ is homeomorphic to a disk whose boundary 
$\partial(\disk(R, 1)\cap H_{i, \C})=\sphere(R, 1)\cap H_{i, \C}$ 
is a circle. This circle is presented as ``dotted circle'' in 
the Kirby diagram (\cite{akb, kir-top, gom-sti}). 
We note that $\sphere(R, 1)\cap H_{i, \C}$ is the knot corresponding to 
the divide $\overline{H}_i:=H_i\cap\Rect(R, 1)$. 
Thus the Kirby diagram of the $1$-handlebody is the link associated with 
the divide consisting of $n$ intervals 
$\{\overline{H_1}, \dots, \overline{H}_n\}$ in $\Rect(R, 1)$ 
(Figure \ref{fig:1handle}). 

\begin{figure}[htbp]
\centering
\begin{tikzpicture}

\draw[thick] (-5,-1)  rectangle (5,1);
\draw (0, -1) node [below] {$\Rect(R, 1)$}; 
\draw[thick] (-4,-1) -- node[right] {$\overline{H}_1$} (-2,1);
\filldraw (-2.3, 0.7) circle (0.1);
\draw[thick] (-1,-1) -- node[right] {$\overline{H}_2$} (0,1);
\filldraw (-0.15, 0.7) circle (0.1);
\draw[thick] (3,-1) -- node[right] {$\overline{H}_3$} (1.5,1);
\filldraw (1.725, 0.7) circle (0.1);

\end{tikzpicture}
\caption{Divide whose link (with dots) represents the $1$-handlebody}\label{fig:1handle}
\end{figure}
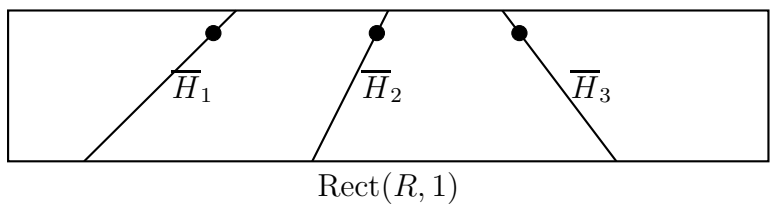

From the description in the previous section, to each chamber $C\in\ch_F(\A)$, 
the union of the neighborhood of $C$ 
\begin{equation}
\{(\bm{x}, \bm{y})\mid \bm{x}\in C, ||\bm{y}||_{\infty}\leq\rho(x_2)\}
\end{equation}
and the trajectories of the contraction 
\begin{equation}
\{\sigma_t(\bm{x}, \bm{y})
\mid \bm{x}\in C, ||\bm{y}||_{\infty}=\rho(x_2), 0\leq t\leq 1\}
\end{equation}
gives an explicit $2$-handle $h^2(C)$ attached to $M_4$ (Figure \ref{fig:M1234}). 

Let $P=(x_1, x_2)\in C$. 
The real part of $\sigma(x_1, x_2, -\rho(x_2), -\rho(x_2))$ 
is $(x_1-x_2+1-\rho(x_2), 1-\rho(x_2))$ which we denote by $\bm{a}_1(P)$. 
Similarly, the real part of $\sigma(x_1, x_2, \rho(x_2), -\rho(x_2))$ 
is $(x_1+x_2-1+\rho(x_2), 1-\rho(x_2))$ which we denote by $\bm{a}_2(P)$. 

\begin{theorem}
\label{thm:attach}
The attaching circle of the $2$-handle $h^2(C)$ for $C\in\ch_F(\A)$ is 
equal to $\FS(\bm{a}_1, \bm{a}_2)$ with zero framing. 
(Figure \ref{fig:attc}) 
\end{theorem}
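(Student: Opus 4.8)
The plan is to read the attaching circle directly off the explicit handle $h^2(C)=N\cup T$ and the contraction $\sigma$, compute its $\sigma$-image by hand, and match it edge-by-edge with $\FS(\bm{a}_1,\bm{a}_2)$; the framing will then follow from Proposition~\ref{prop:linking}. First I would fix the $2$-handle product structure. Every chamber $C\in\ch_F(\A)$ relevant here lies in $\{x_2>1\}$ (its vertices are intersection points of $\A$, which sit in $1<x_2<R_0$, and $C$ is their convex hull cut out by half-planes), so $\rho(x_2)$ is bounded away from $0$ on the compact set $\overline{C}$. Rescaling the fibre by $\rho(x_2)^{-1}$ identifies $N=\{(\bm{x},\bm{y})\mid \bm{x}\in C,\ ||\bm{y}||_\infty\le\rho(x_2)\}$ with $C\times D^2_{\bm{y}}\cong D^2\times D^2$, a genuine $2$-handle. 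The part of $\partial N$ glued to $M_4$ through the trajectory collar $T$ is the solid torus $\sigma(C\times\partial D^2_{\bm{y}})=\sigma(C\times S^1)$, and the attaching circle is its core. Since the core of $C\times S^1$ is $\{P\}\times S^1$ for any interior $P\in C$ and these are isotopic as $P$ runs over the connected set $C$ (consistent with Proposition~\ref{prop:trivial}), the attaching circle is $\sigma\bigl(\{P\}\times\{||\bm{y}||_\infty=\rho(x_2)\}\bigr)$ for a fixed $P=(x_1,x_2)\in C$.

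Next I would compute this image. Parametrising the tangent circle by $\bm{y}=\rho(x_2)\bm{v}$ with $\bm{v}\in\partial\Rect(1,1)$, the map $\sigma$ fixes the imaginary part, which therefore equals $\rho(x_2)\bm{v}=(1-b)\bm{v}$ with $b=1-\rho(x_2)$, exactly as in the definition of $F$. For the real part, $\sigma$ slides the base point along direction $\bm{y}$ (which stays in $M(\A)$ by Proposition~\ref{prop:easy}(1)) until the height reaches $1-||\bm{y}||_\infty=b$; on each edge $R_1,\dots,R_4$ this gives a first coordinate that is affine in the edge parameter and interpolates linearly between the first coordinates of $\bm{a}_1(P)=(x_1-x_2+1-\rho(x_2),\,1-\rho(x_2))$ and $\bm{a}_2(P)=(x_1+x_2-1+\rho(x_2),\,1-\rho(x_2))$. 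Using $\tfrac{a_1+a_2}{2}=x_1$ and $\tfrac{a_1-a_2}{2}=-(x_2-1+\rho(x_2))$, and checking that the corners $\bm{v}=(\pm1,\pm1)$ land on $\bm{a}_1$ or $\bm{a}_2$, one sees that the four piecewise-linear formulas coincide termwise with those defining $F$ on $R_1,R_2,R_3,R_4$. Hence the attaching circle is precisely $\FS(\bm{a}_1,\bm{a}_2)$.

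For the framing I would exploit the same product structure: the handle framing is represented by a parallel copy of the attaching circle obtained from a constant displacement of the base point within the $C$-factor, namely $\sigma\bigl(\{P'\}\times\{||\bm{y}||_\infty=\rho(x_2)\}\bigr)$. Taking this displacement vertical, $P'=(x_1,x_2+\varepsilon)$, the parallel copy is $\FS(\bm{a}_1(P'),\bm{a}_2(P'))$, which sits at height $b'=1-\rho(x_2+\varepsilon)\neq b$; by the separating-sphere argument of Proposition~\ref{prop:linking} it is unlinked from the attaching circle, so the framing integer is $0$.

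The hard part will be Step~1: correctly reading the $2$-handle product structure off the non-uniform neighborhood $N$—in particular verifying $C\subset\{x_2>1\}$ so that $\rho(x_2)>0$ and $N\cong D^2\times D^2$—and identifying the attaching circle with the core of the attaching solid torus, i.e. the $\sigma$-image of the tangent circle over a single interior point of $C$. Once this is settled, the edge-by-edge comparison in Step~2 is a routine calculation and the framing in Step~3 is immediate from Proposition~\ref{prop:linking}.
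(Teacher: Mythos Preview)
Your proposal is correct and follows essentially the same approach as the paper: the attaching circle is read off directly from the definition of $\sigma$ applied to the tangent square over a chosen point $P\in C$, and the framing is obtained by comparing with the parallel copy over $P'=P+(0,\varepsilon)$ via Proposition~\ref{prop:linking}. Your write-up simply fills in the edge-by-edge verification that the paper leaves implicit; the one minor inaccuracy---that an unbounded $C\in\ch_F(\A)$ need not lie entirely in $\{x_2>1\}$---is harmless, since only the part of $C$ with $1<x_2<R_0$ enters the handle construction and one can always choose $P$ there.
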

\begin{proof}
The first assertion is clear from the definition of $\sigma$. 
The framing is computed as the linking number of 
$\FS(\bm{a}_1(P), \bm{a}_2(P))$ and $\FS(\bm{a}_1(P'), \bm{a}_2(P'))$, 
where $P'=P+(0, \varepsilon)$. 
By Proposition \ref{prop:linking}, the linking number is zero. 
\end{proof}

\begin{figure}[htbp]
\centering
\begin{tikzpicture}

\draw[thin] (-5,-0.5) -- (-5, 1.5) --  (5,1.5) -- (5, -0.5);
\draw[dashed] (-5.5,0) -- (5.5,0) node[right] {$F$};

\draw[thick] (-1.5,-0.5) -- ++ (2.6,5.2) node [above] {$H_1$};
\draw[thick] (1.5,-0.5) -- ++ (-2.6,5.2) node [above] {$H_2$};
\draw[thick] (2.5,-0.5) -- ++ (-2.6,5.2) node [above] {$H_3$};

\coordinate (P1) at (0.1, 3);
\fill[red] (P1) circle (0.06) node [above] {$P_1$};
\draw[dashed, red] (P1) -- ++(-1.5, -1.5);
\draw[dashed, red] (P1) -- ++(1.5, -1.5);
\draw[red] (P1) +(-1.75, -1.75)-- ++(1.75, -1.75);
\draw[red] (P1) ++(-1.5, -1.5) rectangle ++(-0.5, -0.5);
\draw[<->, >=stealth, red] (P1) ++(-1.5, -1.5) -- ++(-0.5, -0.5);
\draw[red] (P1) ++(1.5, -1.5) rectangle ++(0.5, -0.5);
\draw[<->, >=stealth, red] (P1) ++(1.5, -1.5) -- ++(0.5, -0.5);

\coordinate (P2) at (0.4, 4);
\fill[blue] (P2) circle (0.06) node [above] {$P_2$};
\draw[dashed, blue] (P2) -- ++(-2.5, -2.5);
\draw[dashed, blue] (P2) -- ++(2.5, -2.5);
\draw[blue] (P2) +(-3.1, -3.1)-- ++(3.1, -3.1);
\draw[blue] (P2) ++(-2.5, -2.5) rectangle ++(-1.2, -1.2);
\draw[<->, >=stealth, blue] (P2) ++(-2.5, -2.5) -- ++(-1.2, -1.2);
\draw[blue] (P2) ++(2.5, -2.5) rectangle ++(1.2, -1.2);
\draw[<->, >=stealth, blue] (P2) ++(2.5, -2.5) -- ++(1.2, -1.2);

\end{tikzpicture}
\caption{Attaching circles $\FS(\bm{a}_1(P_i), \bm{a}_2(P_i))$}\label{fig:attc}
\end{figure}

In the next section, we describe $\FS(\bm{a}_1(P), \bm{a}_2(P))$ 
as a link associated with a divide with cusps. 

\section{Kirby diagrams using divides with cusps}

\label{sec:kirby}

\subsection{Main result}

\label{subsec:main} 

Let $b=|\ch_F(\A)|$ and 
$\ch_F(\A)=\{C_1, \dots, C_b\}$. For each $1\leq s\leq b$, choose 
a point $P_s=(k_s, h_s)\in C_s$. We may assume that 
$1<h_1<\dots<h_b<R_0$. We also choose $0<h_s'<1$ and $\varepsilon>0$ 
(sufficiently small) 
such that $1>h_1'>\dots >h_b'>0$, $|h_s'-h_{s+1}'|>2\varepsilon$ 
and $\varepsilon<h_b'<h_1'<1-\varepsilon$. (E.g., $h_s'=1-\rho(h_s)$)  

We define the sign $\delta_i(C)$ of $C\in\ch_F(\A)$ with respect to 
$H_i$ as follows. 
\begin{equation}
\delta_i(C)=
\begin{cases}
+1, & \mbox{ if }\alpha_i(C)>0, \\
-1, & \mbox{ if }\alpha_i(C)<0. 
\end{cases}
\end{equation}
We note that $\delta_i(C)=+1$ (resp. $\delta_i(C)=-1$) 
is equivalent to that $C$ is sitting in the right (resp. left) side of $H_i$. 
To each $C=C_i\in\ch_F(\A)$, define a curve $\gamma(C)$ with cusps 
by joining the following parts (these parts are drawn 
in the strip $h_i'-\varepsilon<x_2<h_i'+\varepsilon$ in 
$\Rect(R, 1)$). 
\begin{itemize}
\item[(i)] 
In the left side of $\overline{H}_1$, 
if $\delta_1(C)=+1$, then  draw a leftward cusp, 
if $\delta_1(C)=-1$, then  draw a round curve as in Figure \ref{fig:left}.  
\begin{figure}[htbp]
\centering
\begin{tikzpicture}

\draw (0,0)-- node [below] {$\delta_1(C)>0$} (-4,0)--(-4,2)--(0,2);
\draw (-1.5,0)--(-0.5,2) node [above] {$\overline{H}_1$}; 
\filldraw (-0.6, 1.8) circle (0.1);
\draw[thick, blue] (0,0.5) .. controls ++(-1, 0) and ++(1,0) .. ++(-3,0.5);
\draw[thick, blue] (0,1.5) .. controls ++(-1, 0) and ++(1,0) .. ++(-3,-0.5);

\coordinate (L1) at (6,0); 
\draw (L1)-- node [below] {$\delta_1(C)<0$} ++(-4,0)--++(0,2)--++(4,0);
\draw (L1)+(-1.5,0)--++(-0.5,2) node [above] {$\overline{H}_1$}; 
\filldraw (L1)+(-0.6, 1.8) circle (0.1);
\draw[thick, blue] (L1)++(0,0.5) .. controls ++(-4, 0) and ++(-4,0) .. ++(0,1);

\end{tikzpicture}
\caption{The curve $\gamma(C)$ at the left side of $H_1$.}\label{fig:left}
\end{figure}
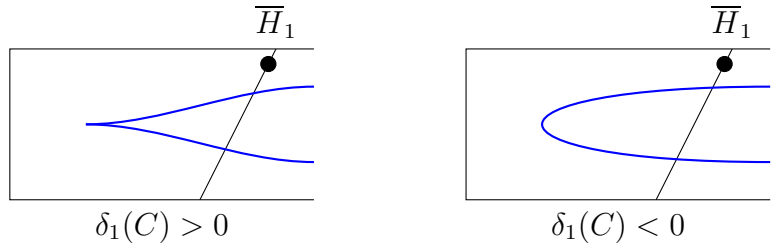
\item[(ii)] According to $(\delta_i(C), \delta_{i+1}(C))=(+1, -1), (-1, +1)$, or 
$\delta_i=\delta_{i+1}$, we draw a (two) curves between $H_i$ and $H_{i+1}$ 
as in Figure \ref{fig:between}. 
\begin{figure}[htbp]
\centering
\begin{tikzpicture}

\coordinate (B1) at (0,0); 
\coordinate (B2) at (5,0); 
\coordinate (B3) at (10,0); 

\draw (B1) -- node [below] {\small $\delta_i(C)=+1, \delta_{i+1}(C)=-1$} ++(4,0);
\draw (B1)++(0,2) -- ++(4,0);
\draw (B1)++(0.5, 0) -- ++(1,2) node [above] {$\overline{H}_i$}; 
\draw (B1)++(3.5, 0) -- ++(-1,2) node [above] {$\overline{H}_{i+1}$}; 
\filldraw (B1)+(1.4, 1.8) circle (0.1);
\filldraw (B1)+(2.6, 1.8) circle (0.1);
\draw[thick, blue] (B1)++(0,1.5)--++(4,0);
\draw[thick, blue] (B1)++(0,0.5) .. controls ++(1,0) and ++(0,-0.3) .. ++(2, 0.3);
\draw[thick, blue] (B1)++(4,0.5) .. controls ++(-1,0) and ++(0,-0.3) .. ++(-2, 0.3);

\draw (B2) -- node [below] {\small $\delta_i(C)=-1, \delta_{i+1}(C)=+1$} ++(4,0);
\draw (B2)++(0,2) -- ++(4,0);
\draw (B2)++(0.5, 0) -- ++(1,2) node [above] {$\overline{H}_i$}; 
\draw (B2)++(3.5, 0) -- ++(-1,2) node [above] {$\overline{H}_{i+1}$}; 
\filldraw (B2)+(1.4, 1.8) circle (0.1);
\filldraw (B2)+(2.6, 1.8) circle (0.1);
\draw[thick, blue] (B2)++(0,1.5)--++(4,0);
\draw[thick, blue] (B2)++(0,0.5) .. controls ++(1,0) and ++(0,0.3) .. ++(2, -0.3);
\draw[thick, blue] (B2)++(4,0.5) .. controls ++(-1,0) and ++(0,0.3) .. ++(-2, -0.3);

\draw (B3) -- node [below] {\small $\delta_i(C)=\delta_{i+1}(C)$} ++(4,0);
\draw (B3)++(0,2) -- ++(4,0);
\draw (B3)++(0.5, 0) -- ++(1,2) node [above] {$\overline{H}_i$}; 
\draw (B3)++(3.5, 0) -- ++(-1,2) node [above] {$\overline{H}_{i+1}$}; 
\filldraw (B3)+(1.4, 1.8) circle (0.1);
\filldraw (B3)+(2.6, 1.8) circle (0.1);
\draw[thick, blue] (B3)++(0,1.5)--++(4,0);
\draw[thick, blue] (B3)++(0,0.5)--++(4,0);

\end{tikzpicture}
\caption{The curve $\gamma(C)$ between $H_i$ and $H_{i+1}$.}\label{fig:between}
\end{figure}
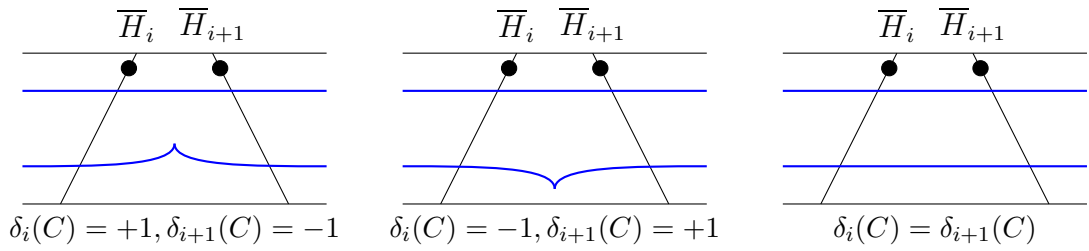
\item[(iii)] 
In the right side of $\overline{H}_n$, 
if $\delta_n(C)=+1$, then  draw a round curve, 
if $\delta_n(C)=-1$, then  draw a rightward cusp as in Figure \ref{fig:right}.  
\begin{figure}[htbp]
\centering
\begin{tikzpicture}

\coordinate (R1) at (0,0);
\coordinate (R2) at (6,0);

\draw (R1)-- node [below] {$\delta_n(C)<0$} ++(4,0)--++(0,2)--++(-4,0);
\draw (R1)++(1.5,0)--++(-1,2) node [above] {$\overline{H}_n$}; 
\filldraw (R1)+(0.6, 1.8) circle (0.1);
\draw[thick, blue] (R1)++(0,0.5) .. controls ++(1, 0) and ++(-1,0) .. ++(3,0.5);
\draw[thick, blue] (R1)++(0,1.5) .. controls ++(1, 0) and ++(-1,0) .. ++(3,-0.5);

\coordinate (R2) at (6,0); 
\draw (R2)-- node [below] {$\delta_n(C)>0$} ++(4,0)--++(0,2)--++(-4,0);
\draw (R2)++(1.5,0)--++(-1,2) node [above] {$\overline{H}_n$}; 
\filldraw (R2)+(0.6, 1.8) circle (0.1);
\draw[thick, blue] (R2)++(0,0.5) .. controls ++(4, 0) and ++(4,0) .. ++(0,1);

\end{tikzpicture}
\caption{The curve $\gamma(C)$ at the right side of $H_n$.}\label{fig:right}
\end{figure}
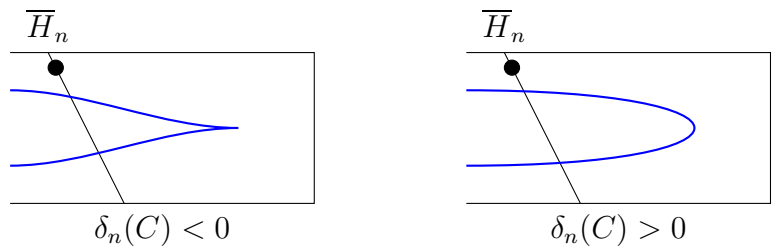
\end{itemize}

\begin{theorem}
\label{thm:main}
The link corresponding to 
$\{\gamma(C_1), \dots, \gamma(C_b)\}$ (with zero framing) present 
the attaching circles of the $2$-handles of $M(\A)$. 
\end{theorem}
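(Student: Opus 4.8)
The plan is to reduce the statement to Theorem~\ref{thm:attach} and then to translate the knot $\FS(\bm a_1(P),\bm a_2(P))$, together with its linking behaviour against the dotted circles, into the divide-with-cusps picture. By Theorem~\ref{thm:attach}, for $C=C_i\in\ch_F(\A)$ with chosen point $P=(k,h)$ the attaching circle of $h^2(C)$ is $\FS(\bm a_1(P),\bm a_2(P))$ with zero framing, where $\bm a_1(P)=(k-h+1-\rho(h),h')$, $\bm a_2(P)=(k+h-1+\rho(h),h')$, and $h'=1-\rho(h)$. By Proposition~\ref{prop:trivial} this is an unknot, while Proposition~\ref{prop:linking} shows that distinct attaching circles are unlinked and Theorem~\ref{thm:attach} fixes the framing to be zero. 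Hence it suffices to prove that $L(\gamma(C))$ is isotopic to $\FS(\bm a_1(P),\bm a_2(P))$ as a knot in $\sphere(R,1)$, in a manner preserving the linking with each dotted circle $L(\overline H_i)$ presenting the $1$-handles. I would therefore fix the diagram $\{L(\overline H_1),\dots,L(\overline H_n)\}$ and compare the two presentations of the $2$-handle core inside its complement.

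The first ingredient is the geometry of the endpoints: $\bm a_1(P)$ and $\bm a_2(P)$ are the feet, at height $h'$, of the two rays issuing from $P$ in the down-left and down-right $45^\circ$ directions, so the segment $[\bm a_1(P),\bm a_2(P)]$ is the downward $45^\circ$ shadow of $P$, and at the height $h'<1$ the lines $\overline H_1,\dots,\overline H_n$ are disjoint and ordered from left to right because all intersection points of $\A$ lie above $x_2=1$. The second ingredient is the dictionary of Figure~\ref{fig:twist}: as $\bm v$ runs over the edges $R_1,\dots,R_4$ the imaginary part of $F$ sweeps the fibre direction, and this dictionary converts such sweeps into the horizontal strands and half-twists of a cusped divide. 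Applying it to $\FS$ produces precisely the local building blocks out of which the end-pieces (i),(iii) and the interior pieces (ii) of the construction of $\gamma(C)$ are glued.

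The heart of the proof is then a line-by-line comparison. Evaluating the defining form $\alpha_i$ along the two $45^\circ$ rays shows that $\alpha_i$ is monotone on each of them (decreasing to the down-left and increasing to the down-right, since $\tfrac{\pi}{4}\le\theta_i\le\tfrac{3\pi}{4}$); therefore $\delta_i(C)$ determines on which side of $\overline H_i$ the two ends of the shadow lie, and hence whether the strands pass $\overline H_i$ straight, turn back before reaching it, or cross it through a half-twist. From this one reads off how $\FS(\bm a_1(P),\bm a_2(P))$ links each $L(\overline H_i)$. I would then check, case by case, that the prescribed features reproduce exactly this data: Figure~\ref{fig:left} at the left end (governed by $\delta_1(C)$), Figure~\ref{fig:between} in the interior (governed by the pair $(\delta_i(C),\delta_{i+1}(C))$, with an upward or downward cusp at a sign change and a straight pass when the signs agree), and Figure~\ref{fig:right} at the right end (governed by $\delta_n(C)$). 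Finally, the moves of Remark~\ref{rem:move}, performed in the complement of $\overline H_1,\dots,\overline H_n$, deform the glued local models into $\gamma(C)$ without altering the link or its linking with the dotted circles, yielding $L(\gamma(C))\simeq\FS(\bm a_1(P),\bm a_2(P))$.

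I expect the main obstacle to be the consistency of this sign-by-sign matching. Concretely, one must verify that the orientation of the half-turn forced by the shadow meeting $\overline H_i$ (a positive versus a negative half-twist in Figure~\ref{fig:twist}) is exactly the one prescribed by $\delta_i(C)$ and its neighbours, so that every linking number of the attaching circle with an $L(\overline H_i)$ is correctly reproduced; and one must ensure that all the divide moves of Remark~\ref{rem:move} used to arrive at $\gamma(C)$ remain in the complement of the lines, so that these linking numbers are never disturbed and the final knot stays the zero-framed unknot of Theorem~\ref{thm:attach}. Patching the purely local prescriptions (i)--(iii) into a single globally correct diagram is the delicate bookkeeping on which the argument rests.
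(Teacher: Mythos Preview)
Your plan follows the paper's strategy closely: reduce to Theorem~\ref{thm:attach}, then identify $\FS(\bm a_1(P),\bm a_2(P))$ with the cusped-divide link piece by piece, and clean up with the moves of Remark~\ref{rem:move}. The decomposition of the segment $[\bm a_1(P),\bm a_2(P)]$ by the lines $\overline H_i$ and the case analysis according to $(\delta_i(C),\delta_{i+1}(C))$ are exactly what the paper does in \S\ref{sec:proof}.

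There is, however, one point where your outline is weaker than the paper and risks a genuine gap. You repeatedly frame the comparison in terms of \emph{linking numbers} with the dotted circles $L(\overline H_i)$: ``every linking number of the attaching circle with an $L(\overline H_i)$ is correctly reproduced''. But matching all linking numbers, together with the fact that both curves are unknots in $\sphere(R,1)$, is not enough to conclude that they are isotopic \emph{in the complement} of the dotted link; the complement of $n$ unknotted circles has nonabelian $\pi_1$, and there are non-isotopic unknots with identical linking data. What you actually need is an isotopy that stays in the complement, and this is what the paper supplies directly.

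Concretely, the paper does not argue via linking invariants at all. It splits $\FS(\bm a_1(P),\bm a_2(P))$ along the four edges $R_1,R_2,R_3,R_4$. For $R_2\cup R_4$ the imaginary argument is confined to $[-\tfrac{\pi}{4},\tfrac{\pi}{4}]\cup[\tfrac{3\pi}{4},\tfrac{5\pi}{4}]$, and since every $H_i$ has slope in $[\tfrac{\pi}{4},\tfrac{3\pi}{4}]$ one can simply straighten these pieces to the constant-direction upper strand of $\gamma(C)$ without touching any $H_{i,\C}$. For $R_1$ (and the marginal end pieces) the paper parametrises the image as $(\bm c(t),\bm v(t))$ with $\bm v(t)$ pointing toward $P$, so $v_2(t)$ keeps a definite sign, chooses a target $(\widetilde{\bm c}(t),\widetilde{\bm v}(t))$ tangent to the local cusped curve with the \emph{same} sign for $\widetilde v_2(t)$, and uses the straight-line homotopy $s(\widetilde{\bm c},\widetilde{\bm v})+(1-s)(\bm c,\bm v)$. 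The sign condition guarantees the interpolated vector never vanishes and never becomes tangent to any $H_i$, so the homotopy stays in $M(\A)$ and gives an honest isotopy in the complement of the dotted circles. This explicit linear interpolation is the missing mechanism in your plan; once you have it, the ``delicate bookkeeping'' you flag dissolves, because the sign of $v_2$ is what simultaneously controls the cusp orientation and keeps the isotopy away from the lines.
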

We will prove Theorem \ref{thm:main} in \S \ref{sec:proof}. 

The curves $\gamma(C_1), \dots, \gamma(C_b)$ sometimes contain 
redundant cusps. Therefore, we define reduced curves 
$\gamma'(C_1), \dots, \gamma'(C_b)$ 
which are obtained from the previous curves 
by the moves formulated in Remark \ref{rem:move}. 
\begin{definition}
For a chamber $C\in\ch_F(\A)$, 
let $m_1:=\min\{i\mid\delta_i(C)=-1\}$ and 
$m_2:=\max\{i\mid\delta_i(C)=+1\}$. 
Let us define the curve $\gamma'(C)$ by joining 
above (i), (ii), (iii) for the lines 
$H_{m_1}, H_{m_1+1}, \dots, H_{m_2}$ instead of 
$H_1, \dots, H_n$. 
We call $\gamma'(C)$ the reduced curve of $\gamma(C)$. 
(Figure \ref{fig:reduce}) 
\end{definition}

\begin{proposition}
\label{prop:reduce}
The divides 
$(\gamma'(C), \overline{H}_1, \dots, \overline{H}_n)$ and 
$(\gamma(C), \overline{H}_1, \dots, \overline{H}_n)$ 
give isotopic links in $\sphere(R, 1)$. 
\end{proposition}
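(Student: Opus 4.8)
The plan is to deduce Proposition~\ref{prop:reduce} from the move-invariance of the associated link (Remark~\ref{rem:move}), by exhibiting a finite sequence of moves of Figure~\ref{fig:moves} carrying the divide $(\gamma(C),\overline H_1,\dots,\overline H_n)$ to $(\gamma'(C),\overline H_1,\dots,\overline H_n)$ while keeping the dotted-circle components $\overline H_1,\dots,\overline H_n$ in place. The first step is to locate precisely where the two curves differ. By minimality of $m_1$ and maximality of $m_2$ we have $\delta_i(C)=+1$ for every $i<m_1$ and $\delta_i(C)=-1$ for every $i>m_2$, while $\delta_{m_1}(C)=-1$ and $\delta_{m_2}(C)=+1$. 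Hence the two recipes produce identical pieces over $H_{m_1},\dots,H_{m_2}$, and $\gamma(C)$ carries two extra ``tongues'' absent from $\gamma'(C)$: a left tongue, consisting of the leftward cusp from rule (i) at $H_1$ (since $\delta_1=+1$), the same-sign strands of rule (ii) running over $\overline H_1,\dots,\overline H_{m_1-1}$, and the upward cusp produced by the $(+1,-1)$ transition at the pair $(H_{m_1-1},H_{m_1})$; and, by the mirror symmetry of rules (i)--(iii), a right tongue ending in the rightward cusp from rule (iii) at $H_n$.

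I would carry out the reduction on the left tongue, the right one being handled by the mirror-image argument. In $\gamma(C)$ the left tongue is a pair of parallel strands emanating from the leftward cusp, crossing the dotted circles $\overline H_1,\dots,\overline H_{m_1-1}$ and closing up at the upward $(+1,-1)$ cusp just to the left of $\overline H_{m_1}$. Because both strands lie on the same side of each $\overline H_i$ (this is exactly what $\delta_i(C)=+1$ encodes for all $i<m_1$), the finger has zero linking with $\overline H_i$, so by the moves of Remark~\ref{rem:move} I can slide the leftward cusp rightward, one dotted circle at a time, past $\overline H_1,\dots,\overline H_{m_1-1}$; each slide retracts the finger across $\overline H_i$ and removes its two crossings with that component without changing the link type. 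After $m_1-1$ slides the leftward cusp sits in the gap between $\overline H_{m_1-1}$ and $\overline H_{m_1}$, adjacent to the upward cusp of the $(+1,-1)$ transition; these two complementary cusps then combine by a cusp-cancellation move into a single round cap lying to the left of $\overline H_{m_1}$ — precisely the contribution of rule (i) to $\gamma'(C)$ when $\delta_{m_1}(C)=-1$. Applying the mirror argument to the right tongue slides the rightward cusp leftward across $\overline H_n,\dots,\overline H_{m_2+1}$ and cancels it against the upward cusp of the transition at $(H_{m_2},H_{m_2+1})$, yielding the round cap to the right of $\overline H_{m_2}$ demanded by rule (iii) when $\delta_{m_2}(C)=+1$.

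Since every intermediate move preserves the isotopy type of the associated link by Remark~\ref{rem:move}, and the central portion over $H_{m_1},\dots,H_{m_2}$ is never touched, the end result is exactly $(\gamma'(C),\overline H_1,\dots,\overline H_n)$, which proves the proposition. I expect the main obstacle to be the bookkeeping of the slide-and-cancel step: one must check that retracting the extreme cusp across a same-sign pair of parallel strands is genuinely realized by a composition of the three moves of Figure~\ref{fig:moves} (rather than by an illegal move that would alter the link), and one must track the orientation of each cusp (leftward/rightward at the ends versus the upward cusp at the $(+1,-1)$ transition) so that the terminal cancellation yields a smooth round cap and not a spurious cusp or crossing. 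A secondary point is the degenerate case $m_1>m_2$, where the sign string is a single block of $+1$'s followed by a block of $-1$'s: here both tongues are slid toward the unique central $(+1,-1)$ cusp and collapse together into one round curve spanning the gap between $\overline H_{m_2}$ and $\overline H_{m_1}$, and one should verify that this matches the degenerate reading of the definition of $\gamma'(C)$.
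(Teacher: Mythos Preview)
Your proposal is correct and follows the same approach as the paper: reduce $\gamma(C)$ to $\gamma'(C)$ by applying the moves of Remark~\ref{rem:move}. The paper's own proof is considerably terser---it simply notes that when $m_1>1$ (so $\delta_1(C)=+1$) the leftward cusp can be pushed through the moves of Figure~\ref{fig:moves} to obtain $\gamma'(C)$, and leaves the symmetric case and the step-by-step bookkeeping to the reader---whereas you spell out the finger structure, the one-line-at-a-time slide, and the terminal cusp cancellation explicitly, which is a legitimate elaboration of the same argument.
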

\begin{proof}
If $m_1=1, m_2=n$, then $\gamma'(C)=\gamma(C)$. 
Suppose $m_1>1$ (or $m_2<n$). Then $\delta_1(C)=+1$. Hence at the 
left side of $\overline{H}_1$, $\gamma(C)$ has a leftward cusp. Using the moves in 
Remark \ref{rem:move} (Figure \ref{fig:moves}), 
$\gamma(C)$ is deformed to $\gamma'(C)$. 
\end{proof}

\begin{example}
\label{ex:reduce}
Let us consider 
$4$-lines $\A=\{H_1, H_2, H_3, H_4\}$ as in Figure \ref{fig:4lines}. 
Let $\ch_F(\A)=\{C_1, \dots, C_6\}$ and fix $P_i\in C_i$ as shown in the figure. 
Then the curves $\gamma_i:=\gamma(C_i)$ ($i=1, \dots, 6$) are 
as in Figure \ref{fig:4lines} (below).
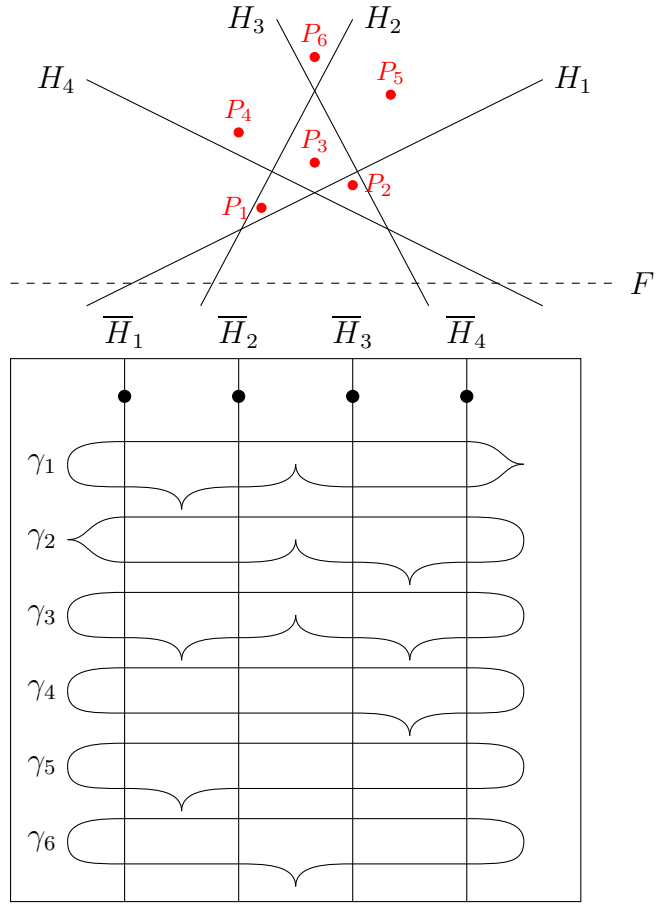
\begin{figure}[htbp]
\centering
\begin{tikzpicture}

\coordinate (O1) at (4, 8);
\coordinate (R1) at (0,0);
\coordinate (R2) at (7.5,0);
\coordinate (R3) at (0,7);
\coordinate (R4) at (7.5,7);

\draw[dashed] (O1)++(-4,0) -- ++(8,0) node [right] {$F$}; 
\draw (O1)++(-3,-0.3) -- ++(6,3) node [right] {$H_1$};
\draw (O1)++(-1.5,-0.3) -- ++(2,3.8) node [right] {$H_2$};
\draw (O1)++(1.5,-0.3) -- ++(-2,3.8) node [left] {$H_3$};
\draw (O1)++(3,-0.3) -- ++(-6,3) node [left] {$H_4$};

\filldraw[red] (O1)++(-0.7,1) circle (0.06) node [left] {\footnotesize $P_1$};
\filldraw[red] (O1)++(0.5,1.3) circle (0.06) node [right] {\footnotesize $P_2$};
\filldraw[red] (O1)++(0,1.6) circle (0.06) node [above] {\footnotesize $P_3$};
\filldraw[red] (O1)++(-1,2) circle (0.06) node [above] {\footnotesize $P_4$};
\filldraw[red] (O1)++(1,2.5) circle (0.06) node [above] {\footnotesize $P_5$};
\filldraw[red] (O1)++(0,3) circle (0.06) node [above] {\footnotesize $P_6$};

\draw (R1) +(0, -0.2) rectangle (R4); 
\draw (R1)++(1.5,-0.2) -- ++(0,7.2) node [above] {$\overline{H}_1$}; 
\filldraw (R3)++(1.5,-0.5) circle (0.08);

\draw (R1)++(3,-0.2) -- ++(0,7.2) node [above] {$\overline{H}_2$}; 
\filldraw (R3)++(3,-0.5) circle (0.08);

\draw (R1)++(4.5,-0.2) -- ++(0,7.2) node [above] {$\overline{H}_3$}; 
\filldraw (R3)++(4.5,-0.5) circle (0.08);

\draw (R1)++(6,-0.2) -- ++(0,7.2) node [above] {$\overline{H}_4$}; 
\filldraw (R3)++(6,-0.5) circle (0.08);

\draw (R1)++(0.75,5.6) node [left] {$\gamma_1$} .. controls ++(0,0.3) and ++(-0.3,0) .. ++(0.75, 0.3);
\draw (R1)++(1.5,5.9) -- ++(4.5,0);
\draw (R1)++(0.75,5.6) .. controls ++(0,-0.3) and ++(-0.5,0) .. ++(0.75, -0.3);
\draw (R1)++(1.5,5.3) .. controls ++(0.3,0) and ++(0,0.3) .. ++(0.75, -0.3);
\draw (R1)++(3,5.3) .. controls ++(-0.3,0) and ++(0,0.3) .. ++(-0.75, -0.3);
\draw (R1)++(3,5.3) .. controls ++(0.3,0) and ++(0,-0.3) .. ++(0.75, 0.3);
\draw (R1)++(4.5,5.3) .. controls ++(-0.3,0) and ++(0,-0.3) .. ++(-0.75, 0.3);
\draw (R1)++(4.5,5.3) -- ++(1.5,0);
\draw (R1)++(6,5.3) .. controls ++(0.5,0) and ++(-0.3,0) .. ++(0.75, 0.3);
\draw (R1)++(6,5.9) .. controls ++(0.5,0) and ++(-0.3,0) .. ++(0.75, -0.3);

\draw (R2)++(-0.75,4.6)  .. controls ++(0,0.3) and ++(0.3,0) .. ++(-0.75, 0.3);
\draw (R2)++(-1.5,4.9) -- ++(-4.5,0);
\draw (R2)++(-0.75,4.6) .. controls ++(0,-0.3) and ++(0.5,0) .. ++(-0.75, -0.3);
\draw (R2)++(-1.5,4.3) .. controls ++(-0.3,0) and ++(0,0.3) .. ++(-0.75, -0.3);
\draw (R2)++(-3,4.3) .. controls ++(0.3,0) and ++(0,0.3) .. ++(0.75, -0.3);
\draw (R2)++(-3,4.3) .. controls ++(-0.3,0) and ++(0,-0.3) .. ++(-0.75, 0.3);
\draw (R2)++(-4.5,4.3) .. controls ++(0.3,0) and ++(0,-0.3) .. ++(0.75, 0.3);
\draw (R2)++(-4.5,4.3) -- ++(-1.5,0);
\draw (R2)++(-6,4.3) .. controls ++(-0.5,0) and ++(0.3,0) .. ++(-0.75, 0.3);
\draw (R2)++(-6,4.9) .. controls ++(-0.5,0) and ++(0.3,0) .. ++(-0.75, -0.3) node [left] {$\gamma_2$};

\draw (R1)++(0.75,3.6) node [left] {$\gamma_3$} .. controls ++(0,0.3) and ++(-0.3,0) .. ++(0.75, 0.3);
\draw (R1)++(1.5,3.9) -- ++(4.5,0);
\draw (R1)++(0.75,3.6) .. controls ++(0,-0.3) and ++(-0.5,0) .. ++(0.75, -0.3);
\draw (R1)++(1.5,3.3) .. controls ++(0.3,0) and ++(0,0.3) .. ++(0.75, -0.3);
\draw (R1)++(3,3.3) .. controls ++(-0.3,0) and ++(0,0.3) .. ++(-0.75, -0.3);
\draw (R1)++(3,3.3) .. controls ++(0.3,0) and ++(0,-0.3) .. ++(0.75, 0.3);
\draw (R1)++(4.5,3.3) .. controls ++(-0.3,0) and ++(0,-0.3) .. ++(-0.75, 0.3);
\draw (R1)++(4.5,3.3) .. controls ++(0.3,0) and ++(0,0.3) .. ++(0.75, -0.3);
\draw (R1)++(6,3.3) .. controls ++(-0.3,0) and ++(0,0.3) .. ++(-0.75, -0.3);
\draw (R1)++(6.75,3.6) .. controls ++(0,0.3) and ++(0.3,0) .. ++(-0.75, 0.3);
\draw (R1)++(6.75,3.6) .. controls ++(0,-0.3) and ++(0.5,0) .. ++(-0.75, -0.3);

\draw (R1)++(0.75,2.6) node [left] {$\gamma_4$} .. controls ++(0,0.3) and ++(-0.3,0) .. ++(0.75, 0.3);
\draw (R1)++(1.5,2.9) -- ++(4.5,0);
\draw (R1)++(0.75,2.6) .. controls ++(0,-0.3) and ++(-0.5,0) .. ++(0.75, -0.3);
\draw (R1)++(1.5,2.3)--++(3,0);
\draw (R1)++(4.5,2.3) .. controls ++(0.3,0) and ++(0,0.3) .. ++(0.75, -0.3);
\draw (R1)++(6,2.3) .. controls ++(-0.3,0) and ++(0,0.3) .. ++(-0.75, -0.3);
\draw (R1)++(6.75,2.6) .. controls ++(0,0.3) and ++(0.3,0) .. ++(-0.75, 0.3);
\draw (R1)++(6.75,2.6) .. controls ++(0,-0.3) and ++(0.5,0) .. ++(-0.75, -0.3);

\draw (R1)++(0.75,1.6) node [left] {$\gamma_5$} .. controls ++(0,0.3) and ++(-0.3,0) .. ++(0.75, 0.3);
\draw (R1)++(1.5,1.9) -- ++(4.5,0);
\draw (R1)++(0.75,1.6) .. controls ++(0,-0.3) and ++(-0.5,0) .. ++(0.75, -0.3);
\draw (R1)++(1.5,1.3) .. controls ++(0.3,0) and ++(0,0.3) .. ++(0.75, -0.3);
\draw (R1)++(3,1.3) .. controls ++(-0.3,0) and ++(0,0.3) .. ++(-0.75, -0.3);
\draw (R1)++(3,1.3)--++(3,0);
\draw (R1)++(6.75,1.6) .. controls ++(0,0.3) and ++(0.3,0) .. ++(-0.75, 0.3);
\draw (R1)++(6.75,1.6) .. controls ++(0,-0.3) and ++(0.5,0) .. ++(-0.75, -0.3);

\draw (R1)++(0.75,0.6) node [left] {$\gamma_6$} .. controls ++(0,0.3) and ++(-0.3,0) .. ++(0.75, 0.3);
\draw (R1)++(1.5,0.9) -- ++(4.5,0);
\draw (R1)++(0.75,0.6) .. controls ++(0,-0.3) and ++(-0.5,0) .. ++(0.75, -0.3);
\draw (R1)++(3,0.3) .. controls ++(0.3,0) and ++(0,0.3) .. ++(0.75, -0.3);
\draw (R1)++(4.5,0.3) .. controls ++(-0.3,0) and ++(0,0.3) .. ++(-0.75, -0.3);
\draw (R1)++(1.5,0.3) -- ++ (1.5,0);
\draw (R1)++(4.5,0.3) -- ++ (1.5,0);
\draw (R1)++(6.75,0.6) .. controls ++(0,0.3) and ++(0.3,0) .. ++(-0.75, 0.3);
\draw (R1)++(6.75,0.6) .. controls ++(0,-0.3) and ++(0.5,0) .. ++(-0.75, -0.3);

\end{tikzpicture}
\caption{Generic four lines and its divide with cusps}\label{fig:4lines}
\end{figure}
The curves $\gamma_i, i\geq 3$, are reduced. However, $\gamma_1$ and 
$\gamma_2$ have reductions. The reduced ones $\gamma_1', \gamma_2'$ 
are as in Figure \ref{fig:reduce}. 
\begin{figure}[htbp]
\centering
\begin{tikzpicture}

\coordinate (R1) at (0,0);
\coordinate (R2) at (7.5,0);
\coordinate (R3) at (0,2);
\coordinate (R4) at (7.5,2);

\draw (R1) rectangle (R4); 
\draw (R1)++(1.5,0) -- ++(0,2); 
\filldraw (R3)++(1.5,-0.5) circle (0.08);

\draw (R1)++(3,0) -- ++(0,2); 
\filldraw (R3)++(3,-0.5) circle (0.08);

\draw (R1)++(4.5,0) -- ++(0,2); 
\filldraw (R3)++(4.5,-0.5) circle (0.08);

\draw (R1)++(6,0) -- ++(0,2); 
\filldraw (R3)++(6,-0.5) circle (0.08);

\draw (R1)++(0.75,0.9) node [left] {$\gamma_1'$} .. controls ++(0,0.3) and ++(-0.3,0) .. ++(0.75, 0.3);
\draw (R1)++(1.5,1.2) -- ++(1.5,0);
\draw (R1)++(0.75,0.9) .. controls ++(0,-0.3) and ++(-0.5,0) .. ++(0.75, -0.3);
\draw (R1)++(1.5,0.6) .. controls ++(0.3,0) and ++(0,0.3) .. ++(0.75, -0.3);
\draw (R1)++(3,0.6) .. controls ++(-0.3,0) and ++(0,0.3) .. ++(-0.75, -0.3);
\draw (R1)++(3,0.6) .. controls ++(0.3,0) and ++(0,-0.3) .. ++(0.65, 0.3);
\draw (R1)++(3,1.2) .. controls ++(0.3,0) and ++(0,0.3,0) .. ++(0.65, -0.3);

\draw (R2)++(-0.75,0.9) node [right] {$\gamma_2'$} .. controls ++(0,0.3) and ++(0.3,0) .. ++(-0.75, 0.3);
\draw (R2)++(-1.5,1.2) -- ++(-1.5,0);
\draw (R2)++(-0.75,0.9) .. controls ++(0,-0.3) and ++(0.5,0) .. ++(-0.75, -0.3);
\draw (R2)++(-1.5,0.6) .. controls ++(-0.3,0) and ++(0,0.3) .. ++(-0.75, -0.3);
\draw (R2)++(-3,0.6) .. controls ++(0.3,0) and ++(0,0.3) .. ++(0.75, -0.3);
\draw (R2)++(-3,0.6) .. controls ++(-0.3,0) and ++(0,-0.3) .. ++(-0.65, 0.3);
\draw (R2)++(-3,1.2) .. controls ++(-0.3,0) and ++(0,0.3,0) .. ++(-0.65, -0.3);


\end{tikzpicture}
\caption{Reduction of $\gamma_1$ and $\gamma_2$ of Figure \ref{fig:4lines}}\label{fig:reduce}
\end{figure}
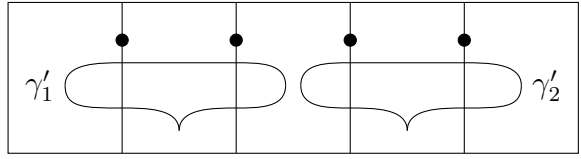
\end{example}


\subsection{Proof of Theorem \ref{thm:main}}
\label{sec:proof}

In this section, we prove Theorem \ref{thm:main}. 
Let $P\in C$. 
Here we will show 
that the link $\FS(\bm{a}_1(P), \bm{a}_2(P))$ is isotopic to 
the link determined by the curve constructed as in \S \ref{subsec:main} 
between $\bm{a}_1(P)$ and $\bm{a}_2(P)$. 
The latter can be deformed to $\gamma'(C)$ (and to $\gamma(C)$) 
by the moves in Remark \ref{rem:move} (Figure \ref{fig:compare}).  

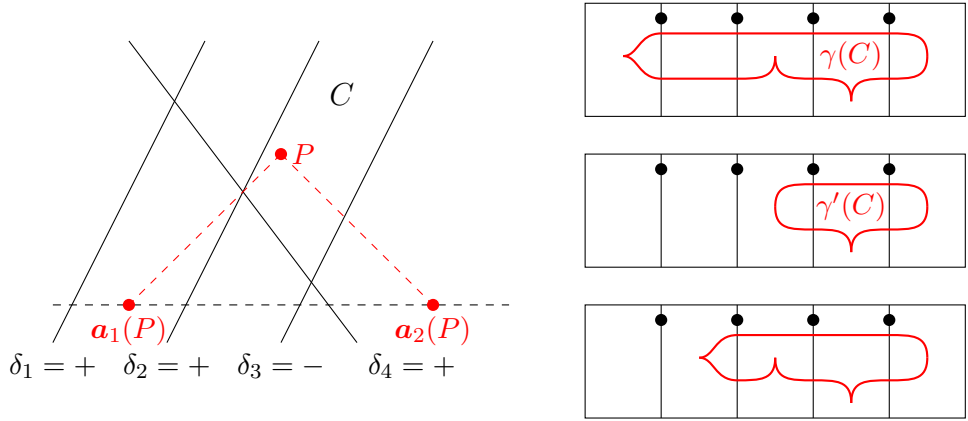
\begin{figure}[htbp]
\centering
\begin{tikzpicture}

\coordinate (Z1) at (0,1.5); 
\coordinate (G1) at (7,4); 
\coordinate (G2) at (7,2); 
\coordinate (G3) at (7,0); 

\draw[dashed] (Z1)--++(6,0);
\draw (Z1)++(0,-0.5) node [below] {\small $\delta_1=+$} --++(2,4);
\draw (Z1)++(1.5,-0.5) node [below] {\small $\delta_2=+$} --++(2,4);
\draw (Z1)++(3,-0.5) node [below] {\small $\delta_3=-$} --++(2,4);
\draw (Z1)++(4,-0.5) node [below right] {\small $\delta_4=+$} --++(-3,4);
\draw (Z1)++(3.8,2.5) node [above] {$C$};
\fill[red] (Z1)++(3,2) circle (0.08) node [right] {\small $P$};
\draw[dashed, red] (Z1)++(3,2) -- ++(-2,-2);
\draw[dashed, red] (Z1)++(3,2) -- ++(2,-2);
\fill[red] (Z1)++(1,0) circle (0.08) node [below] {\small $\bm{a}_1(P)$};
\fill[red] (Z1)++(5,0) circle (0.08) node [below] {\small $\bm{a}_2(P)$};

\draw (G1) rectangle ++(5,1.5);
\draw (G1)++(1,0) -- ++(0,1.5);
\draw (G1)++(2,0) -- ++(0,1.5);
\draw (G1)++(3,0) -- ++(0,1.5);
\draw (G1)++(4,0) -- ++(0,1.5);
\fill (G1)++(1,1.3) circle (0.08);
\fill (G1)++(2,1.3) circle (0.08);
\fill (G1)++(3,1.3) circle (0.08);
\fill (G1)++(4,1.3) circle (0.08);

\draw[red, thick] (G1)++(1,1.1)--++(3,0);
\draw[red, thick] (G1)++(0.5,0.8) .. controls ++(0.2,0) and ++(-0.3,0) .. ++(0.5,0.3);
\draw[red, thick] (G1)++(0.5,0.8) .. controls ++(0.2,0) and ++(-0.3,0) .. ++(0.5,-0.3);
\draw[red, thick] (G1)++(1,0.5)--++(1,0);
\draw[red, thick] (G1)++(2,0.5) .. controls ++(0.3,0) and ++(0,-0.3) .. ++(0.5,0.3);
\draw[red, thick] (G1)++(3,0.5) .. controls ++(-0.3,0) and ++(0,-0.3) .. ++(-0.5,0.3);
\draw[red, thick] (G1)++(3,0.5) .. controls ++(0.3,0) and ++(0,0.3) .. ++(0.5,-0.3);
\draw[red, thick] (G1)++(4,0.5) .. controls ++(-0.3,0) and ++(0,0.3) .. ++(-0.5,-0.3);
\draw[red, thick] (G1)++(3.5,0.8) node {\small $\gamma(C)$}; 
\draw[red, thick] (G1)++(4.5, 0.8) .. controls ++(0, -0.3) and ++(0.3,0) .. ++(-0.5,-0.3); 
\draw[red, thick] (G1)++(4.5, 0.8) .. controls ++(0, 0.3) and ++(0.3,0) .. ++(-0.5,0.3); 

\draw (G2) rectangle ++(5,1.5);
\draw (G2)++(1,0) -- ++(0,1.5);
\draw (G2)++(2,0) -- ++(0,1.5);
\draw (G2)++(3,0) -- ++(0,1.5);
\draw (G2)++(4,0) -- ++(0,1.5);
\fill (G2)++(1,1.3) circle (0.08);
\fill (G2)++(2,1.3) circle (0.08);
\fill (G2)++(3,1.3) circle (0.08);
\fill (G2)++(4,1.3) circle (0.08);

\draw[red, thick] (G2)++(3,1.1)--++(1,0);
\draw[red, thick] (G2)++(2.5, 0.8) .. controls ++(0, -0.3) and ++(-0.3,0) .. ++(0.5,-0.3); 
\draw[red, thick] (G2)++(2.5, 0.8) .. controls ++(0, 0.3) and ++(-0.3,0) .. ++(0.5,0.3); 
\draw[red, thick] (G2)++(3,0.5) .. controls ++(0.3,0) and ++(0,0.3) .. ++(0.5,-0.3);
\draw[red, thick] (G2)++(4,0.5) .. controls ++(-0.3,0) and ++(0,0.3) .. ++(-0.5,-0.3);
\draw[red, thick] (G2)++(3.5,0.8) node {\small $\gamma'(C)$}; 
\draw[red, thick] (G2)++(4.5, 0.8) .. controls ++(0, -0.3) and ++(0.3,0) .. ++(-0.5,-0.3); 
\draw[red, thick] (G2)++(4.5, 0.8) .. controls ++(0, 0.3) and ++(0.3,0) .. ++(-0.5,0.3);

\draw (G3) rectangle ++(5,1.5);
\draw (G3)++(1,0) -- ++(0,1.5);
\draw (G3)++(2,0) -- ++(0,1.5);
\draw (G3)++(3,0) -- ++(0,1.5);
\draw (G3)++(4,0) -- ++(0,1.5);
\fill (G3)++(1,1.3) circle (0.08);
\fill (G3)++(2,1.3) circle (0.08);
\fill (G3)++(3,1.3) circle (0.08);
\fill (G3)++(4,1.3) circle (0.08);

\draw[red, thick] (G3)++(2,1.1)--++(2,0);
\draw[red, thick] (G3)++(1.5,0.8) .. controls ++(0.2,0) and ++(-0.3,0) .. ++(0.5,0.3);
\draw[red, thick] (G3)++(1.5,0.8) .. controls ++(0.2,0) and ++(-0.3,0) .. ++(0.5,-0.3);
\draw[red, thick] (G3)++(2,0.5) .. controls ++(0.3,0) and ++(0,-0.3) .. ++(0.5,0.3);
\draw[red, thick] (G3)++(3,0.5) .. controls ++(-0.3,0) and ++(0,-0.3) .. ++(-0.5,0.3);
\draw[red, thick] (G3)++(3,0.5) .. controls ++(0.3,0) and ++(0,0.3) .. ++(0.5,-0.3);
\draw[red, thick] (G3)++(4,0.5) .. controls ++(-0.3,0) and ++(0,0.3) .. ++(-0.5,-0.3);
\draw[red, thick] (G3)++(4.5, 0.8) .. controls ++(0, -0.3) and ++(0.3,0) .. ++(-0.5,-0.3); 
\draw[red, thick] (G3)++(4.5, 0.8) .. controls ++(0, 0.3) and ++(0.3,0) .. ++(-0.5,0.3);

\end{tikzpicture}
\caption{$\gamma(C), \gamma'(C)$, and 
the curve obtained from $\FS(\bm{a}_1(P), \bm{a}_2(P))$ by the 
operation below (which is in between $\gamma(C)$ and $\gamma'(C)$). 
}\label{fig:compare}
\end{figure}

First, we consider the image of $R_2$ and $R_4$ in 
$\FS(\bm{a}_1(P), \bm{a}_2(P))$. The argument $\theta$ of 
tangent vectors satisfies 
$-\frac{\pi}{4}\leq\theta\leq\frac{\pi}{4}$ (for $R_2$)  or 
$\frac{3\pi}{4}\leq\theta\leq\frac{5\pi}{4}$ (for $R_4$). 
Hence by the assumptions on $\A$ (\S \ref{sec:setting}),  
the image of $R_2$ is isotopic to the constant vectors with 
argument $\theta=0$, and that of $R_4$ is isotopic to $\theta=\pi$. 
Therefore, the image of $R_2\cup R_4$ is isotopic to 
a part of a simple closed curve
corresponding to the tangent vectors of the upper line of $\gamma(C)$. 


Next, we consider the marginal case (the case corresponding to 
Figure \ref{fig:leftside}). Suppose that $\bm{a}_1(P)$ is in between 
$H_{i-1}$ and $H_i$. Let us consider $\sigma(R_4)$ and $\sigma(R_1)$. 
We shift $\sigma(R_4)$ upward by $\varepsilon$, and 
$\sigma(R_1)$ downward by $\varepsilon$. By connecting 
two left most points by a segment with tangent vectors of 
argument $\frac{5\pi}{4}$, we obtain a piecewise linear curve 
$c:[0, 1]\longrightarrow\Rect(R, 1)\cap H_1^{-}$ (red segments 
in Figure \ref{fig:left}) and tangent vectors 
$\bm{v}(t)\in T_{\bm{c}(t)}\R^2$.  
Note that $\{(\bm{c}(t), \bm{v}(t))\mid 0\leq t\leq 1\}$ forms the 
($\varepsilon$-shifted) image $\sigma(R_4)\cup\sigma(R_1)$.  
Now suppose that $\delta_i(C)=-1$. Then $\bm{v}(0)$ is directing 
the negative half space $H_i^{-}$ and $\bm{v}(1)$ is directing 
the positive half space $H_i^{+}$. 

There exist a curve $\widetilde{\bm{c}}(t)$ 
(the blue curve in Figure \ref{fig:leftside}) and tangent vector 
$\widetilde{\bm{v}}(t)\in T_{\widetilde{\bm{c}}(t)}\R^2$ such that 
\begin{itemize}
\item 
The curve $\widetilde{C}=\{\widetilde{\bm{c}}(t)\mid t\in [0, 1]\}$ 
is smooth. 
\item 
$\widetilde{\bm{c}}(0)=\bm{c}(0), \widetilde{\bm{c}}(1)=\bm{c}(1)$.  
\item 
$\widetilde{\bm{v}}(t)\in T_{\widetilde{\bm{c}}(t)}\widetilde{C}, 
\widetilde{\bm{v}}(t)\neq 0$. 
\item 
Let $\widetilde{\bm{v}}(t)=(\widetilde{v}_1(t), \widetilde{v}_2(t))$. Then 
$\widetilde{v}_2(0)=\widetilde{v}_2(1)=0$, $v_1(0)<0, v_1(1)>0$, 
and $\widetilde{v}_2(t)>0$ for $0<t<1$. 
\end{itemize}
Then we have 
\begin{itemize}
\item both ${\bm{v}}(0)$ and $\widetilde{\bm{v}}(0)$ are directing negative side 
of $H_i$, 
\item both ${\bm{v}}(1)$ and $\widetilde{\bm{v}}(1)$ are directing the positive side 
of $H_{i}$, and 
\item for $0<t<1$, $v_2(t)<0$ and $\widetilde{v}_2(t)<0$ hold. 
\end{itemize}
Therefore, $s\cdot \widetilde{\bm{v}}(t)+(1-s)\cdot \bm{v}(t)$ is nonzero 
for all $0\leq s\leq 1$, and 
$s\cdot (\widetilde{\bm{c}}(t), \widetilde{\bm{v}}(t))+(1-s)\cdot (\bm{c}(t), \bm{v}(t))$ 
($0\leq s\leq 1$) 
gives an isotopy between 
$(\widetilde{\bm{c}}(t), \widetilde{\bm{v}}(t))$ and  $(\bm{c}(t), \bm{v}(t))$. 
The other case ($\delta_i(C)=+1$) is similar. 

\begin{figure}[htbp]
\centering
\begin{tikzpicture}

\draw[thick] (0,0) rectangle (12,4);

\coordinate (P0) at (10,0);
\coordinate (P1) at (10.25,1);
\coordinate (P2) at (10.5,2);
\coordinate (P3) at (10.75,3);
\coordinate (P4) at (11,4);

\fill (2,2) circle (0.08) node [left] {$\bm{a}_1(P)$};

\draw[thick] (P0) -- (P4) node [above] {$\overline{H}_{i}$};
\filldraw (10.875, 3.5) circle (0.1);


\draw[red] (P3) -- node [above] {$(\bm{c}(t), \bm{v}(t))$} (2,3) -- (2,1) -- (P1);

\draw[->, >=stealth, thick, red] (P3) -- ++(-0.8, -0.3); 
\draw[->, >=stealth, thick, red] (P3) ++(-6,0) -- ++(-0.7, -0.4); 
\draw[->, >=stealth, thick, red] (2,3) -- ++(-0.6, -0.6); 
\draw[->, >=stealth, thick, red] (2,1) -- ++(-0.6, -0.6); 
\draw[->, >=stealth, thick, red] (P1) ++(-5,0) -- ++(0, -0.8); 
\draw[->, >=stealth, thick, red] (P1) -- ++(0.2, -0.7); 

\draw[blue] (P3)++(0, -0.02) .. controls ++(-2, 0) and ++(0, 1) .. (4, 2);  
\draw[blue] (P1)++(0, 0.02) .. controls ++(-2, 0) and ++(0, -1) .. (4, 2);  
\draw[->, >=stealth, thick, blue] (P3)++(0,0.05) -- ++(-1, 0); 
\draw[->, >=stealth, thick, blue] (4,2) node [right]  {$(\widetilde{\bm{c}}(t), \widetilde{\bm{v}}(t))$}-- ++(0, -0.8); 
\draw[->, >=stealth, thick, blue] (P1) -- ++(1, 0); 




\end{tikzpicture}
\caption{Isotopy between $(\widetilde{\bm{c}}(t), \widetilde{\bm{v}}(t))$ and  $(\bm{c}(t), \bm{v}(t))$}\label{fig:leftside}
\end{figure}

Next, we consider the case $\delta_i(C)=+1, \delta_{i+1}(C)=-1$. 
The intersection 
$[\bm{a}_1(P), \bm{a}_2(P)]\cap \{\alpha_i\geq 0, \alpha_{i+1}\leq 0\}$ 
is an interval. 
Fix a linear parametrization $c:[0, 1]\stackrel{\simeq}{\longrightarrow} 
[\bm{a}_1(P), \bm{a}_2(P)]\cap \{\alpha_i\geq 0, \alpha_{i+1}\leq 0\}$. 
Then the image of $\sigma(R_3)$ is expressed as 
$\{(\bm{c}(t), \bm{v}(t))\mid t\in[0, 1]\}$, where $\bm{v}(t)\in T_{\bm{c}(t)}\R^2$ 
which points to the fixed point $P\in C$ (i.e., $\bm{v}(t)$ is a positive scalar 
multiple of the vector $\overrightarrow{\bm{c}(t)\cdot P}$). 
In particular, since $\delta_i(C)=+1$, 
$\bm{v}(0)\in T_{c(0)}\R^2$ is directing to the positive side of $H_i$. 
Similarly, 
$\bm{v}(1)\in T_{c(1)}\R^2$ is directing to the negative side of $H_{i+1}$. 
Let $\bm{v}(t)=(v_1(t), v_2(t))$. 
Then we also have $v_2(t)>0$ for all $t\in[0, 1]$. 
There exists a curve $\widetilde{\bm{c}}(t)$ and tangent vector 
$\widetilde{\bm{v}}(t)\in T_{\bm{c}(t)}\R^2$ such that 
(Figure \ref{fig:isotopy}) 
\begin{itemize}
\item 
The curve $\widetilde{C}=\{\widetilde{\bm{c}}(t)\mid t\in [0, 1]\}$ 
has a unique upward cusp. 
\item 
$\widetilde{\bm{c}}(0)=\bm{c}(0), \widetilde{\bm{c}}(1)=\bm{c}(1)$.  
\item 
$\widetilde{\bm{v}}(t)\in T_{\widetilde{\bm{c}}(t)}\widetilde{C}$. 
\item 
Let $\widetilde{\bm{v}}(t)=(\widetilde{v}_1(t), \widetilde{v}_2(t))$. Then 
$\widetilde{v}_2(0)=\widetilde{v}_2(1)=0$, and $\widetilde{v}_2(t)>0$ for 
$0<t<1$. 
\end{itemize}
Then we have 
\begin{itemize}
\item both ${\bm{v}}(0)$ and $\widetilde{\bm{v}}(0)$ are directing the positive side 
of $H_i$, 
\item both ${\bm{v}}(1)$ and $\widetilde{\bm{v}}(1)$ are directing negative side 
of $H_{i+1}$, and 
\item for $0<t<1$, $v_2(t)>0$ and $\widetilde{v}_2(t)>0$ hold. 
\end{itemize}
Therefore, $s\cdot \widetilde{\bm{v}}(t)+(1-s)\cdot \bm{v}(t)$ is nonzero 
for all $0\leq s\leq 1$, and 
$s\cdot (\widetilde{\bm{c}}(t), \widetilde{\bm{v}}(t))+(1-s)\cdot (\bm{c}(t), \bm{v}(t))$ 
($0\leq s\leq 1$) 
gives an isotopy between 
$(\widetilde{\bm{c}}(t), \widetilde{\bm{v}}(t))$ and  $(\bm{c}(t), \bm{v}(t))$. 
Other cases ($(\delta_i(C), \delta_{i+1}(C))=(-1, +1), (+1, +1), (-1, -1)$) are similar.  
This completes the proof of Theorem \ref{thm:main}. 

\begin{figure}[htbp]
\centering
\begin{tikzpicture}

\draw[thick] (0,0) rectangle (12,3);
\draw[thick] (1,0) -- ++(0.9, 3) node [above] {$\overline{H}_i$};
\filldraw (1.81, 2.7) circle (0.1);

\draw[thick] (11,0) -- ++(-0.9,3) node [above] {$\overline{H}_{i+1}$};
\filldraw (10.19, 2.7) circle (0.1);

\draw[red] (1.3, 1) -- (10.7, 1);
\draw[->, >=stealth, thick, red] (1.3, 1) -- ++(0.6, 0.9) node [right] {$(\bm{c}(t), \bm{v}(t))$};
\draw[->, >=stealth, thick, red] (5, 1) -- ++(0.1, 1.1);
\draw[->, >=stealth, thick, red] (8, 1) -- ++(-0.2, 1.1);
\draw[->, >=stealth, thick, red] (10.7, 1) -- ++(-0.6, 0.9);

\draw[blue] (1.3, 1) .. controls ++(1.5,0) and ++(0,-0.7) .. (6, 2);
\draw[blue] (10.7, 1) .. controls ++(-1.5,0) and ++(0,-0.7) .. (6, 2);
\draw[->, >=stealth, thick, blue] (1.3,0.95) -- ++(1.2, 0) node [below] {$(\widetilde{\bm{c}}(t), \widetilde{\bm{v}}(t))$};
\draw[->, >=stealth, thick, blue] (6,2) -- ++(0, 0.8);
\draw[->, >=stealth, thick, blue] (10.7,0.95) -- ++(-1.2, 0);

\end{tikzpicture}
\caption{Isotopy between $(\widetilde{\bm{c}}(t), \widetilde{\bm{v}}(t))$ and  $(\bm{c}(t), \bm{v}(t))$}\label{fig:isotopy}
\end{figure}

\subsection{Further examples}

Let $\Sigma_{g, n}$ be the oriented surface of genus $g$ with $n$ boundary 
components. 
Since $\Sigma_{0, 2}$ is the annulus, 
the interior of $\Sigma_{0, 2}\times\Sigma_{0, 2}$ 
is homeomorphic to $(\C^\times)^2$ which is the complement of 
the arrangement of two lines intersecting at one point. 
Hence, the minimal handle decomposition 
of $\Sigma_{0, 2}\times\Sigma_{0, 2}$ can be expressed in terms of 
divide with cusps (Figure \ref{fig:sigma0202}). 
\begin{figure}[htbp]
\centering
\begin{tikzpicture}

\draw (0,0) circle (1.5); 
\draw (0.9, 1.2) .. controls (0.45, 0.6) and (0.45, -0.6) .. (0.9, -1.2);
\filldraw (0.79, 1.04) circle (0.08);
\draw (-0.9, 1.2) .. controls (-0.45, 0.6) and (-0.45, -0.6) .. (-0.9, -1.2);
\filldraw (-0.79, 1.04) circle (0.08);

\draw (0,-1) .. controls ++(0,0.5) and ++(0,-0.5) .. ++(1,1); 
\draw (0,-1) .. controls ++(0,0.5) and ++(0,-0.5) .. ++(-1,1); 

\draw(1,0) .. controls ++(0, 0.5) and ++(0, 0.5) .. ++(-2,0) node [left] {$0$} ; 

\end{tikzpicture}
\caption{A divide with a cusp whose link is the Kirby diagram of the minimal 
handle decomposition of $\Sigma_{0, 2}\times\Sigma_{0, 2}$}\label{fig:sigma0202}
\end{figure}
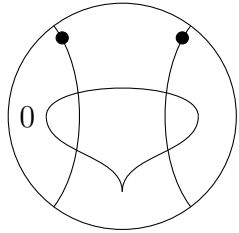
The below are related examples. 

\begin{example}
The space $\Sigma_{0, 4}\times\Sigma_{0, 2}$ is realized as the complexified 
complement of different arrangements 
$\A_1=\{\{x_1=0\}, \{x_2=0\}, \{x_1=x_2\}, \{x_1=-x_2\}\}$, and 
$\A_2=\{\{x_1=0\}, \{x_2=0\}, \{x_2=1\}, \{x_2=2\}\}$. 
(Figure \ref{fig:product})
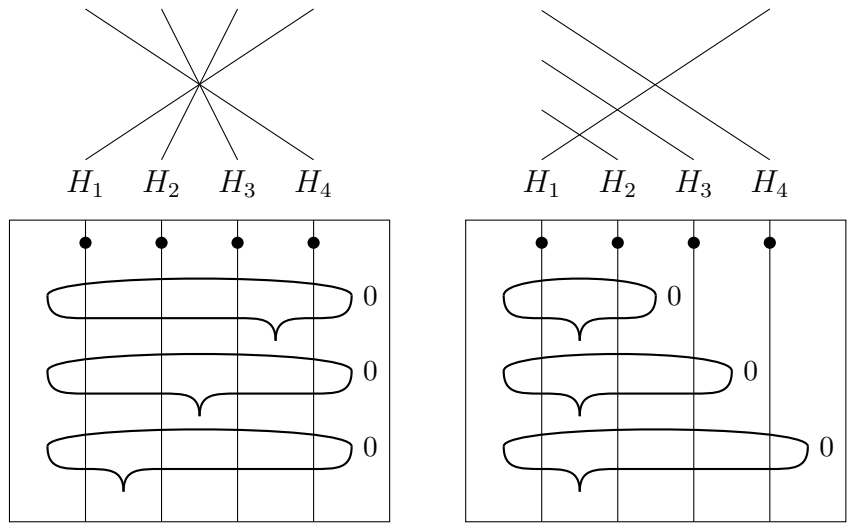
\begin{figure}[htbp]
\centering
\begin{tikzpicture}

\coordinate (A1) at (0,0);
\coordinate (A2) at (6,0);
\coordinate (O1) at (2.5, 7);
\coordinate (O2) at (8.5, 7);

\draw (A1)rectangle ++(5,4);
\draw (A1)++(1,0) -- ++(0, 4);
\draw (A1)++(2,0) -- ++(0, 4);
\draw (A1)++(3,0) -- ++(0, 4);
\draw (A1)++(4,0) -- ++(0, 4);
\fill (A1)++(1,3.7) circle (0.08);
\fill (A1)++(2,3.7) circle (0.08);
\fill (A1)++(3,3.7) circle (0.08);
\fill (A1)++(4,3.7) circle (0.08);

\draw (A1)++(1,4.8) node [below] {$H_1$} -- ++(3,2);
\draw (A1)++(2,4.8) node [below] {$H_2$} -- ++(1,2);
\draw (A1)++(3,4.8) node [below] {$H_3$} -- ++(-1,2);
\draw (A1)++(4,4.8) node [below] {$H_4$} -- ++(-3,2);

\draw[thick] (A1)++(0.5,1) .. controls ++(0, -0.3) and ++(-0.3, 0) .. ++(0.5,-0.3);
\draw[thick] (A1)++(1,0.7) .. controls ++(0.3,0) and ++(0,0.3) .. ++(0.5,-0.3);
\draw[thick] (A1)++(2,0.7) .. controls ++(-0.3,0) and ++(0,0.3) .. ++(-0.5,-0.3);
\draw[thick] (A1)++(2,0.7) -- ++(2,0);
\draw[thick] (A1)++(4.5,1) .. controls ++(0,-0.3) and ++(0.3,0) .. ++(-0.5,-0.3);
\draw[thick] (A1)++(0.5,1) .. controls ++(0, 0.3) and ++(0, 0.3) .. ++((4,0) node [right] {\small $0$};

\draw[thick] (A1)++(0.5,2) .. controls ++(0, -0.3) and ++(-0.3, 0) .. ++(0.5,-0.3);
\draw[thick] (A1)++(1,1.7) -- ++(1,0);
\draw[thick] (A1)++(2,1.7) .. controls ++(0.3,0) and ++(0,0.3) .. ++(0.5,-0.3);
\draw[thick] (A1)++(3,1.7) .. controls ++(-0.3,0) and ++(0,0.3) .. ++(-0.5,-0.3);
\draw[thick] (A1)++(3,1.7) -- ++(1,0);
\draw[thick] (A1)++(4.5,2) .. controls ++(0,-0.3) and ++(0.3,0) .. ++(-0.5,-0.3);
\draw[thick] (A1)++(0.5,2) .. controls ++(0, 0.3) and ++(0, 0.3) .. ++((4,0) node [right] {\small $0$};

\draw[thick] (A1)++(0.5,3) .. controls ++(0, -0.3) and ++(-0.3, 0) .. ++(0.5,-0.3);
\draw[thick] (A1)++(1,2.7) -- ++(2,0);
\draw[thick] (A1)++(3,2.7) .. controls ++(0.3,0) and ++(0,0.3) .. ++(0.5,-0.3);
\draw[thick] (A1)++(4,2.7) .. controls ++(-0.3,0) and ++(0,0.3) .. ++(-0.5,-0.3);
\draw[thick] (A1)++(4.5,3) .. controls ++(0,-0.3) and ++(0.3,0) .. ++(-0.5,-0.3);
\draw[thick] (A1)++(0.5,3) .. controls ++(0, 0.3) and ++(0, 0.3) .. ++((4,0) node [right] {\small $0$};

\draw (A2)rectangle ++(5,4);
\draw (A2)++(1,0) -- ++(0, 4);
\draw (A2)++(2,0) -- ++(0, 4);
\draw (A2)++(3,0) -- ++(0, 4);
\draw (A2)++(4,0) -- ++(0, 4);
\fill (A2)++(1,3.7) circle (0.08);
\fill (A2)++(2,3.7) circle (0.08);
\fill (A2)++(3,3.7) circle (0.08);
\fill (A2)++(4,3.7) circle (0.08);

\draw (A2)++(1,4.8) node [below] {$H_1$} -- ++(3,2);
\draw (A2)++(2,4.8) node [below] {$H_2$} -- ++(-1,0.66);
\draw (A2)++(3,4.8) node [below] {$H_3$} -- ++(-2,1.32);
\draw (A2)++(4,4.8) node [below] {$H_4$} -- ++(-3,1.98);

\draw[thick] (A2)++(0.5,1) .. controls ++(0, -0.3) and ++(-0.3, 0) .. ++(0.5,-0.3);
\draw[thick] (A2)++(1,0.7) .. controls ++(0.3,0) and ++(0,0.3) .. ++(0.5,-0.3);
\draw[thick] (A2)++(2,0.7) .. controls ++(-0.3,0) and ++(0,0.3) .. ++(-0.5,-0.3);
\draw[thick] (A2)++(2,0.7) -- ++(2,0);
\draw[thick] (A2)++(4.5,1) .. controls ++(0,-0.3) and ++(0.3,0) .. ++(-0.5,-0.3);
\draw[thick] (A2)++(0.5,1) .. controls ++(0, 0.3) and ++(0, 0.3) .. ++((4,0) node [right] {\small $0$};

\draw[thick] (A2)++(0.5,2) .. controls ++(0, -0.3) and ++(-0.3, 0) .. ++(0.5,-0.3);
\draw[thick] (A2)++(1,1.7) .. controls ++(0.3,0) and ++(0,0.3) .. ++(0.5,-0.3);
\draw[thick] (A2)++(2,1.7) .. controls ++(-0.3,0) and ++(0,0.3) .. ++(-0.5,-0.3);
\draw[thick] (A2)++(2,1.7) -- ++(1,0);
\draw[thick] (A2)++(3.5,2) .. controls ++(0,-0.3) and ++(0.3,0) .. ++(-0.5,-0.3);
\draw[thick] (A2)++(0.5,2) .. controls ++(0, 0.3) and ++(0, 0.3) .. ++((3,0) node [right] {\small $0$};

\draw[thick] (A2)++(0.5,3) .. controls ++(0, -0.3) and ++(-0.3, 0) .. ++(0.5,-0.3);
\draw[thick] (A2)++(1,2.7) .. controls ++(0.3,0) and ++(0,0.3) .. ++(0.5,-0.3);
\draw[thick] (A2)++(2,2.7) .. controls ++(-0.3,0) and ++(0,0.3) .. ++(-0.5,-0.3);
\draw[thick] (A2)++(2.5,3) .. controls ++(0,-0.3) and ++(0.3,0) .. ++(-0.5,-0.3);
\draw[thick] (A2)++(0.5,3) .. controls ++(0, 0.3) and ++(0, 0.3) .. ++((2,0) node [right] {\small $0$};

\end{tikzpicture}
\caption{Different divides with cusps for $\Sigma_{0, 4}\times\Sigma_{0, 2}$}\label{fig:product}
\end{figure}
\end{example}

\begin{example}
The space $\Sigma_{0, 4}\times\Sigma_{0, 3}$ is realized as the complexified 
complement of the arrangement 
$\A=\{\{x_1=0\}, \{x_1=1\}, \{x_2=0\}, \{x_2=1\}, \{x_2=2\}\}$. 
(Figure \ref{fig:product23})
\begin{figure}[htbp]
\centering
\begin{tikzpicture}

\coordinate (A2) at (0,0);

\draw (A2)++(-1,0) rectangle ++(6,7);
\draw (A2)++(0,0) -- ++(0, 7);
\draw (A2)++(1,0) -- ++(0, 7);
\draw (A2)++(2,0) -- ++(0, 7);
\draw (A2)++(3,0) -- ++(0, 7);
\draw (A2)++(4,0) -- ++(0, 7);
\fill (A2)++(0,6.7) circle (0.08);
\fill (A2)++(1,6.7) circle (0.08);
\fill (A2)++(2,6.7) circle (0.08);
\fill (A2)++(3,6.7) circle (0.08);
\fill (A2)++(4,6.7) circle (0.08);

\draw (A2)++(0,7.8) node [below] {$H_1$} -- ++(3,2);
\draw (A2)++(1,7.8) node [below] {$H_2$} -- ++(3,2);
\draw (A2)++(2,7.8) node [below] {$H_3$} -- ++(-2,1.32);
\draw (A2)++(3,7.8) node [below] {$H_4$} -- ++(-3,1.98);
\draw (A2)++(4,7.8) node [below] {$H_5$} -- ++(-3,1.98);

\draw[thick] (A2)++(0.5,4) .. controls ++(0, -0.3) and ++(-0.3, 0) .. ++(0.5,-0.3);
\draw[thick] (A2)++(1,3.7) .. controls ++(0.3,0) and ++(0,0.3) .. ++(0.5,-0.3);
\draw[thick] (A2)++(2,3.7) .. controls ++(-0.3,0) and ++(0,0.3) .. ++(-0.5,-0.3);
\draw[thick] (A2)++(2,3.7) -- ++(2,0);
\draw[thick] (A2)++(4.5,4) .. controls ++(0,-0.3) and ++(0.3,0) .. ++(-0.5,-0.3);
\draw[thick] (A2)++(0.5,4) .. controls ++(0, 0.3) and ++(0, 0.3) .. ++(4,0) node [right] {\small $0$};

\draw[thick] (A2)++(0.5,5) .. controls ++(0, -0.3) and ++(-0.3, 0) .. ++(0.5,-0.3);
\draw[thick] (A2)++(1,4.7) .. controls ++(0.3,0) and ++(0,0.3) .. ++(0.5,-0.3);
\draw[thick] (A2)++(2,4.7) .. controls ++(-0.3,0) and ++(0,0.3) .. ++(-0.5,-0.3);
\draw[thick] (A2)++(2,4.7) -- ++(1,0);
\draw[thick] (A2)++(3.5,5) .. controls ++(0,-0.3) and ++(0.3,0) .. ++(-0.5,-0.3);
\draw[thick] (A2)++(0.5,5) .. controls ++(0, 0.3) and ++(0, 0.3) .. ++(3,0) node [right] {\small $0$};

\draw[thick] (A2)++(0.5,6) .. controls ++(0, -0.3) and ++(-0.3, 0) .. ++(0.5,-0.3);
\draw[thick] (A2)++(1,5.7) .. controls ++(0.3,0) and ++(0,0.3) .. ++(0.5,-0.3);
\draw[thick] (A2)++(2,5.7) .. controls ++(-0.3,0) and ++(0,0.3) .. ++(-0.5,-0.3);
\draw[thick] (A2)++(2.5,6) .. controls ++(0,-0.3) and ++(0.3,0) .. ++(-0.5,-0.3);
\draw[thick] (A2)++(0.5,6) .. controls ++(0, 0.3) and ++(0, 0.3) .. ++(2,0) node [right] {\small $0$};

\draw[thick] (A2)++(-0.5,1) .. controls ++(0, -0.3) and ++(-0.3, 0) .. ++(0.5,-0.3);
\draw[thick] (A2)++(0,0.7) -- ++(1,0);
\draw[thick] (A2)++(1,0.7) .. controls ++(0.3,0) and ++(0,0.3) .. ++(0.5,-0.3);
\draw[thick] (A2)++(2,0.7) .. controls ++(-0.3,0) and ++(0,0.3) .. ++(-0.5,-0.3);
\draw[thick] (A2)++(2,0.7) -- ++(2,0);
\draw[thick] (A2)++(4.5,1) .. controls ++(0,-0.3) and ++(0.3,0) .. ++(-0.5,-0.3);
\draw[thick] (A2)++(-0.5,1) .. controls ++(0, 0.3) and ++(0, 0.3) .. ++(5,0) node [right] {\small $0$};

\draw[thick] (A2)++(-0.5,2) .. controls ++(0, -0.3) and ++(-0.3, 0) .. ++(0.5,-0.3);
\draw[thick] (A2)++(0,1.7) -- ++(1,0);
\draw[thick] (A2)++(1,1.7) .. controls ++(0.3,0) and ++(0,0.3) .. ++(0.5,-0.3);
\draw[thick] (A2)++(2,1.7) .. controls ++(-0.3,0) and ++(0,0.3) .. ++(-0.5,-0.3);
\draw[thick] (A2)++(2,1.7) -- ++(1,0);
\draw[thick] (A2)++(3.5,2) .. controls ++(0,-0.3) and ++(0.3,0) .. ++(-0.5,-0.3);
\draw[thick] (A2)++(-0.5,2) .. controls ++(0, 0.3) and ++(0, 0.3) .. ++(4,0) node [right] {\small $0$};

\draw[thick] (A2)++(-0.5,3) .. controls ++(0, -0.3) and ++(-0.3, 0) .. ++(0.5,-0.3);
\draw[thick] (A2)++(0,2.7) -- ++(1,0);
\draw[thick] (A2)++(1,2.7) .. controls ++(0.3,0) and ++(0,0.3) .. ++(0.5,-0.3);
\draw[thick] (A2)++(2,2.7) .. controls ++(-0.3,0) and ++(0,0.3) .. ++(-0.5,-0.3);
\draw[thick] (A2)++(2.5,3) .. controls ++(0,-0.3) and ++(0.3,0) .. ++(-0.5,-0.3);
\draw[thick] (A2)++(-0.5,3) .. controls ++(0, 0.3) and ++(0, 0.3) .. ++(3,0) node [right] {\small $0$};

\end{tikzpicture}
\caption{A divide with cusps for $\Sigma_{0, 4}\times\Sigma_{0, 3}$}\label{fig:product23}
\end{figure}
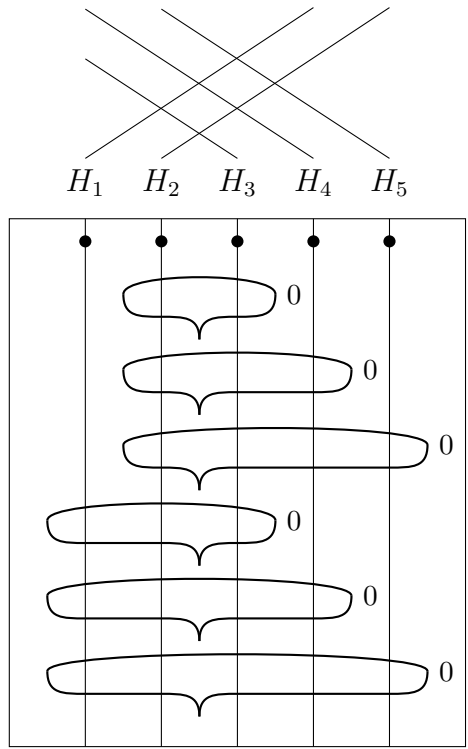
\end{example}




\medskip

\noindent
{\bf Acknowledgements.} 
Masahiko Yoshinaga 
was partially supported by JSPS KAKENHI 
Grant Numbers JP19K21826, JP18H01115. 
The authors thank 
Prof. Masaharu Ishikawa for helpful comments on the first draft. 
They also thank the referee for careful reading and comments 
to improve the paper.

\end{document}